\newtheorem{theo}{Theorem}[section]
\newtheorem{mydf}{Definition}[section]
\newtheorem{prop}{Proposition}[section]
\newtheorem{cor}{Corollary}[section]
\newtheorem{lem}{Lemma}[section]
\newtheorem{scho}{Scholium}[section]
\newenvironment{proof}[1][Proof]{\noindent\textbf{#1.} }{\
\rule{0.5em}{0.5em}}
\begin{document}

\pagestyle{fancy}
\fancyhead{} % clear all header fields
\fancyhead[EC]{\small\it Anastasios Mallios, \ Patrice P.
Ntumba}%
\fancyhead[EL,OR]{\thepage} \fancyhead[OC]{\small\it Symplectic
Reduction of Sheaves of $\mathcal{A}$-modules}%
\fancyfoot{} % clear all footer fields
\renewcommand\headrulewidth{0.5pt}
\addtolength{\headheight}{2pt} % make space for the rule

\title{\Large{Symplectic Reduction of Sheaves of $\mathcal{A}$-modules}}
\author{Anastasios Mallios, \ Patrice P.
Ntumba\footnote{Is the corresponding author for the paper}}
%wishes to thank the Department of Mathematics and Applied
%Mathematics, University of Pretoria, and the Algebra and Geometry
%Section, Department of Mathematics, University of Athens for
%sponsoring the research visit (01/12/2007--15/12/2007, University
%of Athens) during which this paper was finalized.}

\date{}
\maketitle

\begin{abstract} Given an arbitrary sheaf $\mathcal{E}$ of $\mathcal{A}$-modules
(or $\mathcal{A}$-module in short) on a topological space $X$, we
define \textit{annihilator sheaves} of sub-$\mathcal{A}$-modules of
$\mathcal{E}$ in a way similar to the classical case, and obtain
thereafter the analog of the \textit{main theorem}, regarding
classical annihilators in module theory, see Curtis[\cite{curtis},
pp. 240-242]. The familiar classical properties, satisfied by
annihilator sheaves, allow us to set clearly the
\textit{sheaf-theoretic version} of \textit{symplectic reduction},
which is the main goal in this paper.
\end{abstract}
{\it Subject Classification (2000)}: 55P05.\\
{\it Key Words}: $\mathcal{A}$-module, annihilator sheaves, ordered
$\mathbb{R}$-algebraized space, , symplectic $\mathcal{A}$-module,
affine Darboux theorem.

\maketitle

\section*{Introduction} This paper is part of our ongoing project
of \textit{algebraizing} classical symplectic geometry using the
tools of abstract differential geometry ($\grave{a}$ la Mallios).
Our main reference as far as abstract differential geometry is
concerned is the first author's book \cite{mallios}. For the sake
of convenience, we recall here some of the objects of abstract
differential geometry that recur all throughout.

Let $X$ be a topological space. A \textit{sheaf of
$\mathbb{C}$-algebras} or a \textit{$\mathbb{C}$-algebra sheaf},
on $X$, is a triple $\mathcal{A}\equiv (\mathcal{A}, \tau, X)$
satisfying the following conditions: \begin{enumerate} \item
[{$(i)$}] $\mathcal{A}$ is a sheaf of rings. \item [{$(ii)$}]
Fibers $\mathcal{A}_x\equiv \tau^{-1}(x)$, $x\in X$, are
$\mathbb{C}$-algebras. \item [{$(iii)$}] The \textit{scalar
multiplication} in $\mathcal{A}$, viz. the map \[\mathbb{C}\times
\mathcal{A}\longrightarrow \mathcal{A}: (c, a)\longmapsto c\cdot
a\in \mathcal{A}_x\subseteq \mathcal{A}\]with $\tau(a)= x\in X$,
is continuous; in this mapping, $\mathbb{C}$ is assumed to carry
the discrete topology.
\end{enumerate} The triple $(\mathcal{A}, \tau, X)$ is called a
\textit{unital} $\mathbb{C}$-algebra sheaf if the individual
fibers of $\mathcal{A}$, $\mathcal{A}_x$, $x\in X$, are unital
$\mathbb{C}$-algebras. A pair $(X, \mathcal{A})$, with
$\mathcal{A}$ assumed to be unital and commutative, is called a
\textit{$\mathcal{C}$-algebraized space}. Next, suppose that
$\mathcal{A}\equiv (\mathcal{A}, \tau, X)$ is a unital
$\mathbb{C}$-algebra sheaf on $X$. A \textit{sheaf of
$\mathcal{A}$-modules} (or an \textit{$\mathcal{A}$-module}), on
$X$, is a sheaf, $\mathcal{E}\equiv (\mathcal{E}, \rho, X)$, on
$X$ such that the following properties hold:
\begin{enumerate}\item [{$(iv)$}] $\mathcal{E}$ is a sheaf of
abelian groups on $X$. \item [{$(v)$}] Fibers $\mathcal{E}_x$,
$x\in X$, of $\mathcal{E}$ are $\mathcal{A}_x$-modules. \item
[{$(vi)$}] The \textit{left action} $\mathcal{A}\circ
\mathcal{E}\longrightarrow \mathcal{E}$, described by \[(a,
z)\longmapsto a\cdot z\in \mathcal{E}_x\subseteq
\mathcal{E},\]with $\tau(a)= \rho(z)= x\in X$, is continuous.
\end{enumerate}The sheaf-theoretic version of the classical notion
of a \textit{dual module} is defined in this manner: Given a
$\mathbb{C}$-algebraized space $(X, \mathcal{A})$ and an
$\mathcal{A}$-module $\mathcal{E}$ on $X$, the
$\mathcal{A}$-module (on $X$) \[\mathcal{E}^\ast:=
\mathcal{H}om_\mathcal{A}(\mathcal{E}, \mathcal{A})\]is called the
\textit{dual $\mathcal{A}$-module} of $\mathcal{E}$. For tow given
$\mathcal{A}$-modules on a topological space $X$,
$\mathcal{H}om_\mathcal{A}(\mathcal{E}, \mathcal{F})$ is the
$\mathcal{A}$-module generated on $X$ by the (complete) presheaf,
given by $U\longmapsto Hom_{\mathcal{A}|_U}(\mathcal{E}|_U,
\mathcal{F}|_U)$, where $U$ runs over the open subsets of $X$; the
restriction maps of this presheaf are quite obvious. A most
familiar consequence regarding dual $\mathcal{A}$-modules is that
given a free $\mathcal{A}$-module $\mathcal{E}$ of finite rank on
$X$, one has $\mathcal{E}= \mathcal{E}^\ast$, within an
$\mathcal{A}$-isomorphism.

Section $1$ is concerned with \textit{annihilator sheaves} of
sub-$\mathcal{A}$-modules of arbitrary $\mathcal{A}$-modules on
one hand, and $\varphi$-annihilator sheaves, i.e. annihilator
sheaves (of sub-$\mathcal{A}$-modules) with respect to a
non-degenerate bilinear $\mathcal{A}$-morphism $\varphi:
\mathcal{E}\oplus \mathcal{F}\longrightarrow \mathcal{A}$, where
$\mathcal{E}$ and $\mathcal{F}$ are free $\mathcal{A}$-modules of
finite rank. The sheaf-theoretic version of the main theorem on
classical annihilators is examined. The section ends with the
interesting result that given a sub-$\mathcal{A}$-module
$\mathcal{F}$ of an $\mathcal{A}$-module $\mathcal{E}$, the dual
$\mathcal{A}$-module $(\mathcal{E}/\mathcal{F})^\ast$ is
$\mathcal{A}$-isomorphic to the annihilator $\mathcal{F}^\perp$ of
$\mathcal{F}$.

Section $2$ deals with properties of exterior \textit{rankwise}
$\mathcal{A}$-$2$-forms. We provide another proof for the
\textit{affine Darboux theorem}. The proof is derived from E.
Cartan\cite{cartan}.

Section $3$, which is the last section, outlines the
\textit{symplectic reduction} of an $\mathcal{A}$-module
$\mathcal{E}$ by a co-isotropic sub-$\mathcal{A}$-module
$\mathcal{F}$ of $\mathcal{E}$; the $\mathcal{A}$-module
$\mathcal{E}$ carries a symplectic $(\mathcal{A}-)$ structure,
given by the $\mathcal{A}$-morphism $\omega: \mathcal{E}\oplus
\mathcal{E}\longrightarrow \mathcal{A}$.

\section{Annihilator Sheaves}
\begin{mydf} \emph{Let $(\mathcal{S}, \pi, X)$ be a sheaf. By a \textbf{subsheaf}
of $\mathcal{S}$, we mean a sheaf $\mathcal{E}$ on $X$,
\textit{generated by a presheaf} $(E(U), \sigma^U_V)$ which is such
that, for all open $U\subseteq X$ and open $V\subseteq U$,
\begin{itemize}
\item $E(U)\subseteq
\mathcal{S}(U)$,
\item $\sigma^U_V= \rho^U_V|_{E(U)}$,
\end{itemize}
where $(\mathcal{S}(U)\equiv \Gamma(U, \mathcal{S}),
\rho^U_V)\equiv \Gamma(\mathcal{S})$ is the $($complete$)$
presheaf of sections of the sheaf $\mathcal{S}$, cf.
Mallios[\cite{mallios}, Lemma 11.1, p. 48].}\hfill$\square$
\label{subs}\end{mydf}

We can now define the notion of sub-$\mathcal{A}$-module of a given
$\mathcal{A}$-module, which will be of use in the sequel.

\begin{mydf}
\emph{A subsheaf $\mathcal{E}$ of an $\mathcal{A}$-module
$\mathcal{S}$, defined on a topological space $X$, is called a
\textbf{sub-$\mathcal{A}$-module} of $\mathcal{S}$ if $\mathcal{E}$
is an \textit{$\mathcal{A}$-module} and the \textit{inclusion $i:
\mathcal{E}\subseteq \mathcal{S}$ is an $\mathcal{A}$-morphism}.}
\hfill$\square$
\end{mydf}

\begin{lem} Subsheaves are open subsets, and conversely.\label{eqd}\end{lem}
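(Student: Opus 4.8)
The plan is to prove both implications of the equivalence "subsheaves are open subsets, and conversely" by relating the sheaf-space topology of $\mathcal{S}$ to the étalé structure $(\mathcal{S}, \pi, X)$. Recall that a subsheaf $\mathcal{E}$ of $\mathcal{S}$ is, by Definition \ref{subs}, generated by a presheaf $(E(U), \sigma^U_V)$ with $E(U)\subseteq \mathcal{S}(U)$ and $\sigma^U_V = \rho^U_V|_{E(U)}$. The key structural fact I would invoke is that $\pi\colon \mathcal{S}\to X$ is a local homeomorphism, so the images $s(U)$ of continuous local sections $s\in \mathcal{S}(U)$ form a basis for the topology of the sheaf space $\mathcal{S}$ (cf. Mallios [\cite{mallios}, Lemma 11.1, p.~48], which the excerpt already cites).

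First I would prove the forward direction: a subsheaf is open. Let $\mathcal{E}\subseteq \mathcal{S}$ be the subsheaf generated by $(E(U), \sigma^U_V)$. Since $\mathcal{E}$ is generated by this presheaf, every germ in $\mathcal{E}$ is of the form $\sigma^U_V$-image of some $s\in E(U)$, i.e.\ a germ $s_x$ at $x$ of a section $s\in E(U)\subseteq \mathcal{S}(U)$. Thus as a set, $\mathcal{E} = \bigcup\{\, s(U) : U\subseteq X \text{ open},\ s\in E(U)\,\}$. Each $s(U)$ is a basic open set of $\mathcal{S}$ because $s$ is a continuous section and $\pi$ is a local homeomorphism. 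A union of open sets is open, so $\mathcal{E}$ is an open subset of $\mathcal{S}$.

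Next I would prove the converse: an open subset $\mathcal{E}\subseteq \mathcal{S}$ that is itself a sheaf (with the subspace topology and the restricted projection $\pi|_{\mathcal{E}}$) arises as a subsheaf in the sense of Definition \ref{subs}. The natural candidate presheaf is $U\longmapsto E(U) := \{\, s\in \mathcal{S}(U) : s(U)\subseteq \mathcal{E}\,\}$, the sections of $\mathcal{S}$ whose image lies entirely in $\mathcal{E}$, with restriction maps $\sigma^U_V := \rho^U_V|_{E(U)}$. One checks that $E(U)\subseteq \mathcal{S}(U)$ and that the restriction condition holds by construction, and that the sheaf generated by $(E(U), \sigma^U_V)$ is exactly $\mathcal{E}$: since $\mathcal{E}$ is open and $\pi|_{\mathcal{E}}$ is a local homeomorphism onto its image, every germ $\xi\in \mathcal{E}$ over $x$ extends to a continuous local section $s$ of $\mathcal{S}$ near $x$ with $s(U)\subseteq \mathcal{E}$, so $\xi = s_x$ lies in the generated sheaf, and conversely every such germ lies in $\mathcal{E}$.

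I expect the main obstacle to be the converse direction, specifically verifying that an open $\mathcal{E}$ genuinely inherits an étalé (sheaf) structure making $\pi|_{\mathcal{E}}$ a local homeomorphism, and that the presheaf $E(U)$ above is \emph{complete} (i.e.\ generates $\mathcal{E}$ and not a strictly larger or smaller sheaf). The delicate point is showing that the local sections of $\mathcal{S}$ lying in $\mathcal{E}$ are plentiful enough to cover all of $\mathcal{E}$: this is exactly where openness of $\mathcal{E}$ is used, since for each germ $\xi\in\mathcal{E}$ one chooses a basic neighborhood $s(U)\subseteq \mathcal{E}$ of $\xi$ from the section basis of $\mathcal{S}$, and openness guarantees such a neighborhood exists. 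Once this covering property is established, the equivalence follows routinely.
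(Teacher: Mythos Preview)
Your argument is correct and matches the paper's approach: for the forward direction the paper likewise writes $\mathcal{E}=\bigcup\{s(U):s\in E(U),\ U\in\mathcal{T}\}$ and observes each $s(U)$ is open in $\mathcal{S}$ (invoking the section-basis description from Mallios), while for the converse the paper simply refers the reader to Mallios~[\cite{mallios}, p.~5] rather than spelling out the presheaf $E(U)=\{s\in\mathcal{S}(U):s(U)\subseteq\mathcal{E}\}$ as you do.
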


\begin{proof}
Let $\mathcal{S}$ be a sheaf on $(X, \mathcal{T})$, $\mathcal{E}$
a subsheaf, of $\mathcal{S}$, generated by the presheaf $(E(U),
\sigma^U_V)$, and let us denote by $\Re$ the set $\bigcup\{E(U):\
U\in \mathcal{T}\}$. According to Mallios[\cite{mallios}, Theorem
3.1, p.14], the family
\[\mathcal{B}=\{ s(U):\ s\in \Re\
\mbox{and $U\in \mathcal{T}$ with $U= \mbox{Dom}(s)$}\}\]is a
basis for the topology of $\mathcal{E}$, with respect to which
$\mathcal{E}$ is a sheaf on $X$. But $E(U)\subseteq
\mathcal{S}(U)$ for every open $U\subseteq X$, therefore, for all
$s\in \Re$, $s(U)$ is open in $\mathcal{S}$, and thus
$\bigcup\mathcal{B}= \mathcal{E}$ is open in $\mathcal{S}$, as
desired.

For the converse, see Mallios[\cite{mallios}, p. 5].
\end{proof}

It follows from Lemma \ref{eqd} that Definition \ref{subs} and
Mallios' definition of subsheaf, see Mallios[\cite{mallios}, p. 5],
are equivalent.

\begin{mydf}\emph{ Let $\mathcal{E}$ be an $\mathcal{A}$-module on a
topological space $X$, and $\mathcal{F}$ a sub-$\mathcal{A}$-module
of $\mathcal{E}$. Assume that $(\mathcal{E}(U), \sigma^U_V)$ is the
(complete) presheaf of sections of $\mathcal{E}$. By the
\textbf{$\mathcal{A}$-annihilator sheaf} (or \textbf{sheaf of
$\mathcal{A}$-annihilators}, or just
\textbf{$\mathcal{A}$-annihilator}) of $\mathcal{F}$, we mean the
sheaf generated by the \textit{presheaf}, given by the
correspondence
\[U\longmapsto \mathcal{F}(U)^\perp,\]where $U$ is an open subset
of $X$ and
\[\mathcal{F}(U)^\perp= \{t\in \mathcal{E}^\ast(U):\ t(s)=0\
\mbox{for all $s\in \mathcal{E}(U)$}\},\]along with restriction
maps
\[(\rho^\perp)^U_V: \mathcal{F}(U)^\perp\longrightarrow
\mathcal{F}(V)^\perp\]such that
\[(\rho^\perp)^U_V:=
(\sigma^\ast)^U_V|_{\mathcal{F}(U)^\perp},\]with the
$(\sigma^\ast)^U_V: \mathcal{E}^\ast(U)\longrightarrow
\mathcal{E}^\ast(V)$ being the restriction maps for the \textit{dual
presheaf} $(\mathcal{E}^\ast(U), (\sigma^\ast)^U_V)$. We denote by
\[\mathcal{F}^\perp\]the annihilator sheaf of
$\mathcal{F}$.}\hfill$\square$ \label{sub}
\end{mydf}

It follows from Definition \ref{sub}, that the annihilator
$\mathcal{F}^\perp$ of a sub-$\mathcal{A}$-module $\mathcal{F}$ of
an $\mathcal{A}$-module $\mathcal{E}$ is a subsheaf of the dual
$\mathcal{A}$-module $\mathcal{E}^\ast$.

\begin{lem} Let $\mathcal{E}$ be an $\mathcal{A}$-module on a
topological space $X$, and $\mathcal{F}$ a sub-$\mathcal{A}$-module
of $\mathcal{E}$. Then, the correspondence
\[U\longmapsto \mathcal{F}(U)^\perp\]along with maps
$(\rho^\perp)^U_V$, as defined above, yields a \textsf{complete
presheaf of $\mathcal{A}$-modules on $X$}. \label{lsub}\end{lem}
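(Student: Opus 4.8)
The plan is to verify the two defining properties of a complete presheaf of $\mathcal{A}$-modules for the correspondence $U \longmapsto \mathcal{F}(U)^\perp$: first that each $\mathcal{F}(U)^\perp$ carries a natural $\mathcal{A}(U)$-module structure compatible with the restriction maps, making it a presheaf of $\mathcal{A}$-modules, and second that this presheaf satisfies the two sheaf (completeness) conditions. Throughout, the ambient object is the dual presheaf $(\mathcal{E}^\ast(U), (\sigma^\ast)^U_V)$, which is already known to be a complete presheaf of $\mathcal{A}$-modules (it generates the $\mathcal{A}$-module $\mathcal{E}^\ast$); so the strategy is to realize $U \longmapsto \mathcal{F}(U)^\perp$ as a sub-presheaf of this ambient complete presheaf and inherit as much structure as possible.

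First I would check that $\mathcal{F}(U)^\perp$ is an $\mathcal{A}(U)$-submodule of $\mathcal{E}^\ast(U)$. This is the routine algebraic step: if $t, t' \in \mathcal{E}^\ast(U)$ both annihilate every $s \in \mathcal{F}(U)$, then so does $t + t'$, and for any $a \in \mathcal{A}(U)$ the section $a \cdot t$ satisfies $(a\cdot t)(s) = a\cdot t(s) = 0$, using $\mathcal{A}$-linearity of the pairing. (Here I read the condition as $t(s)=0$ for all $s \in \mathcal{F}(U)$, correcting the evident typo that writes $\mathcal{E}(U)$ in the definition.) Next I would confirm that the restriction maps $(\rho^\perp)^U_V$ genuinely land in $\mathcal{F}(V)^\perp$: given $t \in \mathcal{F}(U)^\perp$ and $s \in \mathcal{F}(V)$, one must see that $\bigl((\sigma^\ast)^U_V t\bigr)(s) = 0$. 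Since $\mathcal{F}$ is a sub-$\mathcal{A}$-module of $\mathcal{E}$, the restriction map $\sigma^U_V$ of $\mathcal{F}$ is the restriction of that of $\mathcal{E}$, and the dual restriction is compatible with the pairing; the section $s \in \mathcal{F}(V)$ can be tested locally against restrictions of elements of $\mathcal{F}(U)$, so the vanishing propagates. Together these give a sub-presheaf of $\mathcal{A}$-modules.

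The substantive part is \textbf{completeness}. I would verify the two conditions directly. For the \emph{separation} (monopresheaf) condition: if $t \in \mathcal{F}(U)^\perp$ and $\{V_i\}$ is an open cover of $U$ with $(\rho^\perp)^U_{V_i} t = 0$ for every $i$, then viewing $t$ inside $\mathcal{E}^\ast(U)$ and using that the dual presheaf is already separated forces $t = 0$; since $\mathcal{F}(U)^\perp \subseteq \mathcal{E}^\ast(U)$, this is inherited immediately. For the \emph{gluing} condition: given a compatible family $t_i \in \mathcal{F}(V_i)^\perp$ agreeing on overlaps, completeness of the dual presheaf produces a unique $t \in \mathcal{E}^\ast(U)$ restricting to each $t_i$; the only thing to check is that this glued $t$ actually lies in $\mathcal{F}(U)^\perp$. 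This is where I expect the main obstacle, and the argument runs as follows: to show $t(s) = 0$ for $s \in \mathcal{F}(U)$, I would restrict to each $V_i$, where $t|_{V_i} = t_i \in \mathcal{F}(V_i)^\perp$ and $s|_{V_i} \in \mathcal{F}(V_i)$, so $t_i(s|_{V_i}) = 0$; thus the section $t(s) \in \mathcal{A}(U)$ restricts to $0$ on every $V_i$. Invoking that $\mathcal{A}$ is itself a sheaf (hence its presheaf of sections is separated), the vanishing on a cover forces $t(s) = 0$ globally, placing $t \in \mathcal{F}(U)^\perp$.

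The crux of the whole proof is therefore this last localization argument: the annihilation condition is a pointwise-in-$U$ statement about a section of $\mathcal{A}$ vanishing, and its stability under gluing rests entirely on the sheaf axiom for $\mathcal{A}$ combined with the compatibility of the pairing with restriction. Everything else is inherited from the already-established completeness of the dual presheaf $(\mathcal{E}^\ast(U),(\sigma^\ast)^U_V)$. I would organize the write-up to dispatch the module-structure and restriction-map checks briskly, then spend the bulk of the argument making the gluing-into-$\mathcal{F}(U)^\perp$ step explicit, since that is the only place the specific annihilator condition (as opposed to generic sub-presheaf behavior) is used.
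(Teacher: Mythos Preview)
Your proposal is correct and follows essentially the same approach as the paper: both treat the $\mathcal{A}(U)$-module structure and presheaf axioms as routine, inherit axiom $(S1)$ directly from the completeness of the ambient dual presheaf $(\mathcal{E}^\ast(U),(\sigma^\ast)^U_V)$, and for $(S2)$ glue in $\mathcal{E}^\ast(U)$ and then verify the glued section annihilates $\mathcal{F}(U)$ by restricting to the cover. The only cosmetic difference is that the paper phrases the last step as a proof by contradiction (if $t(s)\neq 0$ then some $t_\alpha(s_\alpha)\neq 0$), whereas you argue directly via the separation axiom for $\mathcal{A}$; these are logically equivalent.
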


\begin{proof}
First, we notice that it is immediate that for every open
$U\subseteq X$, $\mathcal{F}(U)^\perp$ is an
$\mathcal{A}(U)$-module and that $(\mathcal{F}(U)^\perp,
(\rho^\perp)^U_V)$ is a presheaf of $\mathcal{A}$-modules on $X$.
To see that the presheaf $(\mathcal{F}(U)^\perp,
(\rho^\perp)^U_V)$ is complete, let us fix an open subset $U$ of
$X$ and an open covering $\mathcal{U}= \{U_\alpha\}_{\alpha\in I}$
of $U$. Next, let $s$, $t$ be two elements of
$\mathcal{F}(U)^\perp$ such that
\[(\rho^\perp)^U_{U_\alpha}(s)\equiv s_\alpha= t_\alpha \equiv
(\rho^\perp)^U_{U_\alpha}(t),\]for every $\alpha\in I$. Since
$\mathcal{F}(U)^\perp\subseteq \mathcal{E}^\ast(U)$, so $s, t\in
\mathcal{E}^\ast(U)$; $(\mathcal{E}^\ast(U), (\rho^\ast)^U_V)$
being the complete presheaf of sections associated with the sheaf
$\mathcal{E}^\ast$ (for the completeness of $(\mathcal{E}^\ast(U),
(\rho^\ast)^U_V)$, see Mallios[\cite{mallios}, (6.4), Definition
6.1, p.134, and (5.1), p.298]), and
\[\begin{array} {lll} (\rho^\perp)^U_{U_\alpha}(s)=
(\rho^\ast)^U_{U_\alpha}(s) & \mbox{and} &
(\rho^\perp)^U_{U_\alpha}(t)=
(\rho^\ast)^U_{U_\alpha}(t)\end{array},\]we have that $s=t$.
Therefore, axiom $(S1)$ (cf. Mallios[\cite{mallios}, p.46]) is
satisfied\footnote{We can however notice here that the presheaf of
Lemma \ref{lsub} can also be considered as a \textit{functional}
presheaf by its very definition, hence axiom $(S1)$, see
Mallios[\cite{localizing}relation 1.12, p. 81]}.

Now, let us check that axiom $(S2)$, see Mallios[\cite{mallios},
p.47], is also satisfied. So, let $U$ and $\mathcal{U}=
\{U_\alpha\}_{\alpha\in I}$ be as above. Furthermore, let
$(t_\alpha)\in \prod_\alpha\mathcal{F}(U_\alpha)^\perp$ be such that
for any $U_{\alpha\beta}\equiv U_\alpha\cap U_\beta\neq \emptyset$
in $\mathcal{U}$, one has
\[(\rho^\perp)^{U_\alpha}_{U_{\alpha\beta}}(t_\alpha)\equiv
t_\alpha|_{U_{\alpha\beta}}= t_\beta|_{U_{\alpha\beta}}\equiv
(\rho^\perp)^{U_\beta}_{U_{\alpha\beta}}(t_\beta).\]Since
$(\rho^\perp)^{U_\alpha}_{U_{\alpha\beta}}=
(\rho^\ast)^{U_\alpha}_{U_{\alpha\beta}}|_{\mathcal{F}(U_\alpha)^\perp}$,
$\mathcal{F}(U_\alpha)^\perp\subseteq \mathcal{E}^\ast(U_\alpha)$
for all $\alpha, \beta\in I$, and the presheaf
$(\mathcal{E}^\ast(U), (\rho^\ast)^U_V)$ is \textit{complete}, there
exists an element $t\in \mathcal{E}^\ast(U)$ such that
\[(\rho^\ast)^U_{U_\alpha}(t)\equiv t|_{U_\alpha}= t_\alpha,\]for
every $\alpha\in I$. We should now show that $t$ is indeed an
element of $\mathcal{F}(U)^\perp$. To this end, suppose that there
exists an $s\in \mathcal{E}(U)$ such that $t(s)\neq 0\in
\mathcal{A}(U)$; this implies that for some $\alpha\in I$,
\[(\rho^\ast)^U_{U_\alpha}(t)(\rho^U_{U\alpha}(s))\equiv
t|_{U_\alpha}(s|_{U_\alpha})= t_\alpha(s_\alpha)\neq 0,\]which is
impossible as $t_\alpha\in \mathcal{F}(U_\alpha)^\perp$ and
$s_\alpha\in \mathcal{E}(U_\alpha)$. Thus, $t\in
\mathcal{F}(U)^\perp$; hence $(S2)$ is satisfied.
\end{proof}

By virtue of Proposition 11.1, see Mallios[\cite{mallios}, p.51],
if $\mathcal{F}$ is a sub-$\mathcal{A}$-module of an
$\mathcal{A}$-module $\mathcal{E}$, then
\begin{equation}\mathcal{F}^\perp(U)= \mathcal{F}(U)^\perp\label{anni}\end{equation} within an
$\mathcal{A}(U)$-isomorphism.

From the relation (\ref{anni}), we have the following corollary.

\begin{cor}Let $\mathcal{E}$ be an $\mathcal{A}$-module on a
topological space, and $\mathcal{F}$ a sub-$\mathcal{A}$-module of
$\mathcal{E}$. Then \[\mathcal{F}^\perp= \{z\in \mathcal{E}^\ast:\
z(u)=0 \ \mbox{for all $u\in \mathcal{F}$}\}.\]
\end{cor}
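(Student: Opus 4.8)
The plan is to read the corollary off from the identification (\ref{anni}) by passing from sections to stalks, and then to compare the two subsets of the sheaf space $\mathcal{E}^\ast$ fibre by fibre. Write $\mathcal{K}:=\{z\in\mathcal{E}^\ast:\ z(u)=0\ \text{for all }u\in\mathcal{F}\}$ for the right-hand side, understood at each point $x\in X$ as $\mathcal{K}_x=\{z\in\mathcal{E}^\ast_x:\ z(u)=0\ \text{for all }u\in\mathcal{F}_x\}$. Since $\mathcal{F}^\perp$ is the sheaf generated by the complete presheaf $U\longmapsto\mathcal{F}(U)^\perp$, relation (\ref{anni}) gives $\mathcal{F}^\perp(U)=\mathcal{F}(U)^\perp$, and hence $\mathcal{F}^\perp_x=\varinjlim_{U\ni x}\mathcal{F}(U)^\perp$. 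As both $\mathcal{F}^\perp$ and $\mathcal{K}$ are subsets of the sheaf space $\mathcal{E}^\ast$ meeting each fibre $\mathcal{E}^\ast_x$ in $\mathcal{F}^\perp_x$ and $\mathcal{K}_x$ respectively, it suffices to prove $\mathcal{F}^\perp_x=\mathcal{K}_x$ for every $x\in X$.

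For the inclusion $\mathcal{F}^\perp_x\subseteq\mathcal{K}_x$, take $z\in\mathcal{F}^\perp_x$; by the description of the stalk, $z$ is the germ at $x$ of some $t\in\mathcal{F}(U)^\perp$ with $U\ni x$. Given $u\in\mathcal{F}_x$, choose a representative $s\in\mathcal{F}(V)$ with $V\ni x$ and set $W:=U\cap V$. By Lemma \ref{lsub} the restriction $(\rho^\perp)^U_W(t)=t|_W$ again lies in $\mathcal{F}(W)^\perp$, so $t|_W(s|_W)=0$ in $\mathcal{A}(W)$; passing to germs yields $z(u)=0$, whence $z\in\mathcal{K}_x$.

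The reverse inclusion $\mathcal{K}_x\subseteq\mathcal{F}^\perp_x$ is where the main obstacle lies. Here $z\in\mathcal{K}_x$ is the germ of some $t\in\mathcal{E}^\ast(U)$, and the hypothesis $z(u)=0$ for all $u\in\mathcal{F}_x$ only tells us that, for each fixed section $s\in\mathcal{F}(U)$, the germ of $t(s)$ at $x$ vanishes, i.e. $t(s)$ vanishes on some neighbourhood $W_s\subseteq U$ of $x$. To place $t$ in an annihilator $\mathcal{F}(W)^\perp$ over a single $W$, I must upgrade this germ-wise vanishing to the simultaneous vanishing of $t$ on all of $\mathcal{F}(W)$. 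I would carry this out within the structural setting of this section, in which the modules at hand are (locally) free of finite rank: choosing a local basis $e_1,\dots,e_n$ of $\mathcal{F}$ about $x$, the finitely many neighbourhoods $W_{e_1},\dots,W_{e_n}$ intersect in a neighbourhood $W$ on which $t$ annihilates every $e_i$, so that $\mathcal{A}(W)$-linearity of $t$ forces $t|_W\in\mathcal{F}(W)^\perp$. Consequently $z=(t|_W)_x\in\mathcal{F}^\perp_x$.

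Combining the two inclusions gives $\mathcal{F}^\perp_x=\mathcal{K}_x$ at every point, and therefore the equality $\mathcal{F}^\perp=\mathcal{K}$ as subsets of $\mathcal{E}^\ast$, as asserted. The only genuinely delicate point is the localization in the third paragraph: the finite local generation of $\mathcal{F}$ is exactly what converts the pointwise annihilation condition into membership in the annihilator presheaf over one common neighbourhood, and it is this step on which the rigour of the argument rests.
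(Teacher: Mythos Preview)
The paper does not supply a proof of this corollary beyond the sentence ``From the relation (\ref{anni}), we have the following corollary'': it treats the statement as an immediate rephrasing of the section-level identity $\mathcal{F}^\perp(U)=\mathcal{F}(U)^\perp$ in terms of points of the \'etale space $\mathcal{E}^\ast$. Your argument is therefore considerably more explicit than anything the authors provide. You correctly interpret the right-hand side stalkwise, reduce to a fibre-by-fibre comparison, and the inclusion $\mathcal{F}^\perp_x\subseteq\mathcal{K}_x$ is handled cleanly via (\ref{anni}) and Lemma~\ref{lsub}.

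Where your proof departs from the corollary as stated is the reverse inclusion. The hypotheses place no finiteness condition on $\mathcal{E}$ or $\mathcal{F}$, yet you invoke a local basis $e_1,\dots,e_n$ of $\mathcal{F}$ to pass from ``$t_x$ annihilates $\mathcal{F}_x$'' to ``$t|_W\in\mathcal{F}(W)^\perp$ for a single $W$''. You are candid that the argument rests on this step, and you are right to isolate it: without local finite generation there is no evident way to trade the family $\{W_s\}_{s}$ for one common neighbourhood, since vanishing of the stalk map $t_x|_{\mathcal{F}_x}$ does not in general force the germ of $t|_{\mathcal{F}}$ in $\mathcal{H}om_\mathcal{A}(\mathcal{F},\mathcal{A})_x$ to be zero. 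Thus your proof establishes the corollary only under the additional hypothesis that $\mathcal{F}$ is locally finitely generated, not in the generality actually asserted. The paper, for its part, offers no argument at all for this direction; one may read its claim either as an informal identification or as tacitly presupposing the locally-free-of-finite-rank setting that governs the remainder of the section.
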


\begin{lem} Let $\mathcal{E}$ be an $\mathcal{A}$-module on a
topological space $X$, and $U$ an open subset of $X$. Then,
\[\mathcal{E}^\ast|_U= (\mathcal{E}|_U)^\ast\]within an
$\mathcal{A}|_U$-isomorphism of the sheaves in question.
\label{lem02}
\end{lem}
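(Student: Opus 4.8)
The plan is to show that $\mathcal{E}^\ast|_U$ and $(\mathcal{E}|_U)^\ast$ are generated by one and the same complete presheaf on $U$, whence they coincide within an $\mathcal{A}|_U$-isomorphism by the sheaf--presheaf correspondence of Mallios[\cite{mallios}, Proposition 11.1, p. 51]. Recall that, by the very definition of $\mathcal{H}om$, the $\mathcal{A}$-module $\mathcal{E}^\ast= \mathcal{H}om_\mathcal{A}(\mathcal{E}, \mathcal{A})$ is generated by the complete presheaf $V\longmapsto Hom_{\mathcal{A}|_V}(\mathcal{E}|_V, \mathcal{A}|_V)$, with $V$ ranging over the open subsets of $X$; so, for every such $V$, one has the identification $\Gamma(V, \mathcal{E}^\ast)= Hom_{\mathcal{A}|_V}(\mathcal{E}|_V, \mathcal{A}|_V)$, compatibly with the restriction maps.

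First I would compute the sections of the left-hand side. For $W$ open in $U$ --- hence open in $X$, since $U$ is itself open --- the definition of the restriction of a sheaf to an open set gives $\Gamma(W, \mathcal{E}^\ast|_U)= \Gamma(W, \mathcal{E}^\ast)= Hom_{\mathcal{A}|_W}(\mathcal{E}|_W, \mathcal{A}|_W)$, the restriction maps of $\Gamma(\mathcal{E}^\ast|_U)$ being precisely those of $\Gamma(\mathcal{E}^\ast)$ indexed by the open subsets of $U$. Next I would compute the sections of the right-hand side. Since $(\mathcal{E}|_U)^\ast= \mathcal{H}om_{\mathcal{A}|_U}(\mathcal{E}|_U, \mathcal{A}|_U)$ is likewise generated by a complete presheaf, for $W$ open in $U$ we obtain $\Gamma(W, (\mathcal{E}|_U)^\ast)= Hom_{(\mathcal{A}|_U)|_W}((\mathcal{E}|_U)|_W, (\mathcal{A}|_U)|_W)$. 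The transitivity of restriction yields $(\mathcal{A}|_U)|_W= \mathcal{A}|_W$ and $(\mathcal{E}|_U)|_W= \mathcal{E}|_W$, so this reduces once more to $Hom_{\mathcal{A}|_W}(\mathcal{E}|_W, \mathcal{A}|_W)$, with the evident restriction maps. Thus the two complete presheaves of sections agree term by term over the open subsets of $U$, and their restriction maps coincide; applying Mallios[\cite{mallios}, Proposition 11.1, p. 51] furnishes the claimed $\mathcal{A}|_U$-isomorphism $\mathcal{E}^\ast|_U= (\mathcal{E}|_U)^\ast$.

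The point needing the most care is the identification $\Gamma(W, \mathcal{E}^\ast|_U)= \Gamma(W, \mathcal{E}^\ast)$, i.e. that restricting the sheaf $\mathcal{E}^\ast$ to the open set $U$ commutes with taking sections over an open $W\subseteq U$; this is the standard fact that the sheaf space of $\mathcal{E}^\ast|_U$ is the preimage of $U$ under the projection, so that its sections over $W$ are exactly those of $\mathcal{E}^\ast$. One must also verify that every identification made above is not merely a bijection but an $\mathcal{A}|_W$-module isomorphism respecting the restriction maps, which it is, since the scalar action and the restrictions are inherited unchanged from $\mathcal{E}^\ast$. Equivalently, and perhaps more transparently, one could argue stalkwise: for $x\in U$ the stalk of either side equals $\varinjlim_{W\ni x} Hom_{\mathcal{A}|_W}(\mathcal{E}|_W, \mathcal{A}|_W)$, the two direct limits agreeing because the open neighbourhoods of $x$ contained in $U$ are cofinal among all open neighbourhoods of $x$ in $X$.
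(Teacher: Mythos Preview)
Your proof is correct and follows essentially the same approach as the paper: you compute the sections of both sheaves over an arbitrary open $W\subseteq U$, reduce each to $Hom_{\mathcal{A}|_W}(\mathcal{E}|_W, \mathcal{A}|_W)$, and conclude that the two sheaves agree. You are in fact more explicit than the paper about the transitivity of restriction $(\mathcal{E}|_U)|_W=\mathcal{E}|_W$ and about the compatibility of restriction maps, and your supplementary stalkwise remark via cofinality is a nice alternative viewpoint.
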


\begin{proof}
For any open subset $V\subseteq U$, we have
\begin{eqnarray*}(\mathcal{E}^\ast|_U)(V)\equiv
(\mathcal{H}om_\mathcal{A}(\mathcal{E}, \mathcal{A})|_U)(V)=
\mathcal{H}om_\mathcal{A}(\mathcal{E}, \mathcal{A})(V) =
\mbox{Hom}_{\mathcal{A}|_V}(\mathcal{E}|_V, \mathcal{A}|_V),
\end{eqnarray*}and
\[(\mathcal{E}|_U)^\ast(V)\equiv
\mathcal{H}om_{\mathcal{A}|_U}(\mathcal{E}|_U, \mathcal{A}|_U)(V)=
\mbox{Hom}_{\mathcal{A}|_V}(\mathcal{E}|_V, \mathcal{A}|_V).\]We
thus conclude that since these two sheaves have isomorphic (local)
sections, they are $\mathcal{A}|_U$-isomorphic.
\end{proof}

Using the language of Category Theory, see MacLane\cite{maclane} for
Category Theory, Lemma \ref{lem02} infers that the
\textit{dual-$\mathcal{A}$-module functor} (cf.
Mallios[\cite{mallios}, (5.20), p.301]) commutes with the
\textit{restriction-(over $U$)-of-$\mathcal{A}$-modules functor}.
Schematically, we have the commutative diagram
\[\xymatrix{\mathcal{E}\ar[r]\ar[d] &
\mathcal{E}^\ast\ar[d]\\ \mathcal{E}|_U \ar[r] &
\mathcal{E}^\ast|_U= (\mathcal{E}|_U)^\ast.}\]

The following definition hinges on Lemma \ref{lem02} and
Mallios[\cite{mallios}, (5.2), p.298].

\begin{mydf}
\emph{Let $\mathcal{E}$ and $\mathcal{F}$ be $\mathcal{A}$-modules
on a topological space $X$, and let $\varphi\in
\mbox{Hom}_{\mathcal{A}|_U}(\mathcal{E}|_U, \mathcal{F}|_U)=
\mathcal{H}om_\mathcal{A}(\mathcal{E}, \mathcal{F})(U)$ with $U$ an
open subset of $X$. The \textbf{adjoint} of $\varphi$ is the sheaf
$\mathcal{A}|_U$-morphism
\begin{eqnarray*}\lefteqn{\varphi^\ast\equiv (\varphi^\ast_V)_{U\supseteq V, open}\equiv ((\varphi_V)^\ast)_{U\supseteq V,
open}\in \mbox{Hom}_{(\mathcal{A}|_U)^\ast}((\mathcal{F}|_U)^\ast,
(\mathcal{E}|_U)^\ast)}\\ & & =
\mbox{Hom}_{\mathcal{A}^\ast|_U}(\mathcal{F}^\ast|_U,
\mathcal{E}^\ast|_U)=
\mbox{Hom}_{\mathcal{A}|_U}(\mathcal{F}^\ast|_U,
\mathcal{E}^\ast|_U)= \mathcal{H}om_\mathcal{A}(\mathcal{F}^\ast,
\mathcal{E}^\ast)(U)
\end{eqnarray*}
such that for all $\omega\equiv (\omega_W)_{V\supseteq W, open}\in
\mbox{Hom}_{\mathcal{A}|_V}(\mathcal{F}|_V, \mathcal{A}|_V)=
\mathcal{F}(V)$, one has \[\varphi^\ast_V(\omega)= (\omega\circ
\varphi)|_{\mathcal{E}|_V},\]that is \[(\varphi^\ast_V(\omega))_W=
\omega_W\circ \varphi_W\]for all open $W\subseteq
V$.}\hfill$\square$\label{def04}
\end{mydf}

\begin{scho}
\emph{If the $\mathcal{A}$-modules $\mathcal{E}$ and $\mathcal{F}$ in
Definition \ref{def04} are \textit{vector sheaves} on $X$, then
concerning the \textit{adjoint} $\varphi^\ast$ of an
$\mathcal{A}$-morphism $\varphi\in Hom_\mathcal{A}(\mathcal{E},
\mathcal{F})$, we have
\begin{equation}\label{adjoint}\varphi^\ast\in
\mathcal{H}om_\mathcal{A}(\mathcal{F}^\ast, \mathcal{E}^\ast)=
\mathcal{H}om_\mathcal{A}(\mathcal{E}, \mathcal{F})^\ast=
\mathcal{H}om_\mathcal{A}(\mathcal{F},
\mathcal{E}),\end{equation}where as usual the displayed equalities
of (\ref{adjoint}) are $\mathcal{A}$-isomorphisms of the
$\mathcal{A}$-modules involved. For these foregone
$\mathcal{A}$-isomorphisms, see Mallios[\cite{mallios}, Corollary
6.3, p.306].}
\end{scho}

\begin{theo} Let $\mathcal{E}$ and $\mathcal{F}$ be an
$\mathcal{A}$-module on a topological space $X$, and $\varphi\in
\mbox{Hom}_{\mathcal{A}|_U}(\mathcal{E}|_U, \mathcal{F}|_U)$, where
$U$ is some open subset of $X$. Then,
\[(\mbox{im}\varphi)^\perp= \ker \varphi^\ast\]within an
$\mathcal{A}|_U$-isomorphism, that is for every open subset
$V\subseteq U$, we have
\[(\mbox{im}\varphi_V)^\perp= \ker \varphi_V^\ast\]within an
$\mathcal{A}(V)$-isomorphismof modules.
\end{theo}

\begin{proof}
Let $\omega\in (\mathcal{F}|_U)^\ast= \mathcal{F}^\ast|_U$. Then,
\[
\begin{array}{lll} \omega\in \ker \varphi^\ast_V & \Leftrightarrow &
\varphi^\ast_V\omega= 0\\ & \Leftrightarrow & \omega_W\circ
\varphi_W=0 \ \mbox{for all open $W\subseteq V$}\\ & \Leftrightarrow
& (\omega_W\circ \varphi_W)(s)=0 \ \mbox{for all open $W\subseteq V$
and $s\in \mathcal{E}(W)$}\\ & \Leftrightarrow & \omega_W(t)=0\
\mbox{for all open $W\subseteq V$ and $t\in
\varphi_W(\mathcal{E}(W))$}\\ & \Leftrightarrow & \omega_W\in
\mbox{im} (\mathcal{E}(W))^\perp\ \mbox{for all open $W\subseteq V$}\\
& \Leftrightarrow & \omega\in (\mbox{im} \mathcal{E}(V))^\perp=
(\mbox{im} \varphi_V)^\perp
\end{array}
\]Thus, we have the sought $\mathcal{A}(V)$-isomorphism for every
open $V\subseteq U$; the proof is thus finished.
\end{proof}

The notion of an annihilator sheaf may be generalized by
considering any two $\mathcal{A}$-modules that are \textit{``dual"
with respect to some non-degenerate bilinear $\mathcal{A}$-form};
i.e. $\mathcal{A}$-isomorphic within an $\mathcal{A}$-isomorphism
determined by the non-degenerate bilinear $\mathcal{A}$-form
considered. Before we define the notion of non-degenerate bilinear
$\mathcal{A}$-morphism, we would like to make the following
convention: In fact, let $\mathcal{E}$ and $\mathcal{F}$ be
$\mathcal{A}$-modules on a topological space $X$. An
$\mathcal{A}$-morphism $\varphi\in
\mbox{Hom}_\mathcal{A}(\mathcal{E}, \mathcal{F})$ will be denoted
$\varphi\equiv (\varphi^U)_{U\in \mathcal{T}}$ or $\varphi\equiv
(\varphi_U)_{U\in \mathcal{T}}$. These notations will depend on
the situation at hand, but this will be done for the sole purpose
of making indices a lot easier to handle.

\begin{mydf} \emph{Let $\mathcal{E}$ and $\mathcal{F}$ be
$\mathcal{A}$-modules on $X$. A bilinear $\mathcal{A}$-morphism
$\varphi\equiv (\varphi^U)_{X\supseteq U,\ open}: \mathcal{E}\oplus
\mathcal{F}\longrightarrow \mathcal{A}$ is said to be
\textbf{non-degenerate} if for every open subset $U$ of $X$, the
following conditions hold.
\[\begin{array}{ll} \varphi^U(s, t)= 0 & \mbox{for all $t\in
\mathcal{F}(U)$ implies that $s=0$}\end{array}\]and
\[\begin{array}{ll} \varphi^U(s, t)= 0 & \mbox{for all $s\in
\mathcal{E}(U)$ implies that $t=0$.}\end{array}\]}\hfill$\square$
\label{def05}\end{mydf}

\begin{prop} Let $\mathcal{E}$ and $\mathcal{F}$ be each a free
$\mathcal{A}$-module of finite rank, and $\varphi:
\mathcal{E}\oplus \mathcal{F}\longrightarrow \mathcal{A}$ a
non-degenerate bilinear $\mathcal{A}$-morphism. For every
$($local$)$ section $t\in \mathcal{F}(U)$, let $\alpha^U_t:
\mathcal{E}(U)\longrightarrow \mathcal{A}(U)$ be the mapping
\begin{equation}\alpha^U_t(s):= \varphi^U(s, t)\label{eq1}\end{equation}for all $s\in
\mathcal{E}(U)$. Then, the mapping $\alpha^U_t$, as defined above,
is an element of $\mathcal{E}(U)^\ast= \mathcal{E}^\ast(U)=
\mathcal{E}(U)$, where the previous equalities are derived from
Mallios$[$$\cite{mallios}$, $($3.14$)$, p.122, $($5.2.1$)$,
p.298$]$. On the other hand, the mapping
\[\vartheta: \mathcal{F}\longrightarrow \mathcal{E}\]defined
by
\[\vartheta^U(t)= \alpha^U_t\]yields an $\mathcal{A}$-isomorphism of
$\mathcal{F}$ onto $\mathcal{E}$. \label{prop01}
\end{prop}

\begin{proof} That $\alpha^U_t$, as given in relation (\ref{eq1}), is an
element of $\mathcal{E}^\ast(U)$ is clear. It is also easy to see
that the mapping
\[\vartheta^U(t)= \alpha^U_t\]is a module morphism of
$\mathcal{F}(U)$ into $\mathcal{E}^\ast(U)$, where, as above, $U$
is an open subset of $X$. Now, let us show that every component
$\vartheta^U$ of $\vartheta$ is an isomorphism of
$\mathcal{A}(U)$-modules $\mathcal{F}(U)$ and $\mathcal{E}(U)$. To
this purpose, suppose first that $\vartheta^U(t)=
\vartheta^{U}(t')$, for $t, t'\in \mathcal{F}(U)$; then
\[\varphi^U(s, t)= \varphi^U(s, t')\]for all $s\in
\mathcal{E}(U)$. By the bilinearity and non-degeneracy of
$\varphi$, we have that $t=t'$. Therefore, $\vartheta^U$ is
one-to-one.

Now, suppose that the ranks of the free $\mathcal{A}$-modules
$\mathcal{E}$ and $\mathcal{F}$ are $n$ and $m$, respectively,
that is $\mathcal{E}= \mathcal{A}^n$ and $\mathcal{F}=
\mathcal{A}^m$ within $\mathcal{A}$-isomorphisms. Since
$\vartheta^U: \mathcal{F}(U)= \mathcal{A}^m(U)\longrightarrow
\mathcal{A}^n(U)= \mathcal{E}(U)$ is one-to-one and
$\mathcal{A}^k(U)$ is a free module for every $k\in \mathbb{N}$,
it follows that \[\dim \mathcal{E}(U)\geq \dim \mathcal{F}(U).\]By
a similar argument, one shows that there exists a one-to-one
$\mathcal{A}(U)$-morphism of $\mathcal{E}(U)$ into
$\mathcal{F}(U)$, and therefore \[\dim \mathcal{F}(U)\geq \dim
\mathcal{E}(U);\]hence \[\dim \mathcal{E}(U)= \dim
\mathcal{F}(U);\]which implies that $\vartheta^U$ is onto.
Therefore, for every open $U\subseteq X$, $\mathcal{E}(U)$ is
$\mathcal{A}(U)$-isomorphic to $\mathcal{F}(U)$. The restriction
maps of the associated complete presheaves of sections $(\Gamma(U,
\mathcal{E}), \rho^U_V)$ and $(\Gamma(U, \mathcal{F},
\lambda^U_V)$ can be chosen in such a way that the diagram
\[\xymatrix{\mathcal{F}(U)\ar[r]^{\vartheta^U}\ar[d]_{\lambda^U_V} &
\mathcal{E}(U)\ar[d]^{\rho^U_V}\\ \mathcal{F}(V)
\ar[r]_{\vartheta^V} & \mathcal{E}(V)}\]commutes. Hence,
$\vartheta\equiv (\vartheta^U)$ is an $\mathcal{A}$-isomorphism
between $\mathcal{F}$ and $\mathcal{E}$.
\end{proof}

From Proposition \ref{prop01}, we bring about the following
definition.

\begin{mydf} \emph{A pair of free $\mathcal{A}$-modules  $\mathcal{E}$ and
$\mathcal{F}$ are said to be \textbf{$\mathcal{A}$-dual} or just
\textbf{dual} with respect to a bilinear $\mathcal{A}$-morphism
$\varphi: \mathcal{E}\oplus \mathcal{F}\longrightarrow \mathcal{A}$
if $\varphi$ is non-degenerate. If we want to stress the fact that
$\varphi$ is the bilinear map with respect to which the free
$\mathcal{A}$-modules $\mathcal{E}$ and $\mathcal{F}$ are dual, we
shall say that $\mathcal{E}$ and $\mathcal{F}$ are
\textbf{$\varphi$-dual}.} \hfill$\square$
\end{mydf}

We also need the following definition.

\begin{mydf} \emph{Let $\mathcal{E}$ and $\mathcal{F}$ be \textit{$\varphi$-dual free
$\mathcal{A}$-modules}, $S\equiv (S^U)\in
\mbox{End}_\mathcal{A}\mathcal{E}:= (\mathcal{E}nd_\mathcal{A}
\mathcal{E})(X)$, and $T\equiv (T^U)\in
\mbox{End}_\mathcal{A}\mathcal{F}:=
(\mathcal{E}nd_\mathcal{A}\mathcal{F})(X)$. Then, $S$ and $T$ are
said to be \textbf{transposes} of each other provided that
\[\varphi^U(s, T^U(t))= \varphi^U(S^U(s), t)\]for all $s\in
\mathcal{E}(U)$ and $t\in \mathcal{F}(U)$.}\hfill$\square$
\end{mydf}

\begin{theo} Let $\mathcal{E}$ and $\mathcal{F}$ be free
$\mathcal{A}$-modules which are dual with respect to a bilinear
$\mathcal{A}$-morphism $\varphi: \mathcal{E}\oplus
\mathcal{F}\longrightarrow \mathcal{A}$. Moreover, let $S\equiv
(S^U)\in \mbox{End}_\mathcal{A}\mathcal{E}$; then there exists a
uniquely determined family \[T\equiv (T^U)\in \prod_{X\supseteq U,
open}\mbox{End}_{\mathcal{A}(U)}\mathcal{F}(U)\] such that for all
open $U\subseteq X$, $S^U$ and $T^U$ are transposes of each other.
Furthermore, if $T\equiv (T^U)$ is an $\mathcal{A}$-endomorphism
$\mathcal{F}\longrightarrow \mathcal{F}$, then $S$ and $T$ are
transposes of each other as sheaf morphisms.
\end{theo}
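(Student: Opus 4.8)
The plan is to construct $T^U$ one open set at a time, using Proposition \ref{prop01} to identify $\mathcal{F}(U)$ with $\mathcal{E}^\ast(U)$ through the non-degenerate pairing. First I would fix an open $U\subseteq X$ and a section $t\in \mathcal{F}(U)$, and define a map $\psi^U_t:\mathcal{E}(U)\longrightarrow \mathcal{A}(U)$ by $\psi^U_t(s):= \varphi^U(S^U(s), t)$. Since $S^U$ is $\mathcal{A}(U)$-linear and $\varphi^U$ is bilinear, $\psi^U_t$ is $\mathcal{A}(U)$-linear in $s$, so $\psi^U_t\in \mathcal{E}^\ast(U)$. By Proposition \ref{prop01} the correspondence $t\longmapsto \alpha^U_t$ is an $\mathcal{A}(U)$-isomorphism $\mathcal{F}(U)\xrightarrow{\sim}\mathcal{E}^\ast(U)$; hence there is a unique $T^U(t)\in \mathcal{F}(U)$ with $\alpha^U_{T^U(t)}= \psi^U_t$. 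Unwinding the definitions of $\alpha$ and $\psi$, this reads $\varphi^U(s, T^U(t))= \varphi^U(S^U(s), t)$ for all $s\in \mathcal{E}(U)$, which is exactly the transpose relation.

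Next I would verify that $T^U\in \mbox{End}_{\mathcal{A}(U)}\mathcal{F}(U)$. For $a, b\in \mathcal{A}(U)$ and $t, t'\in \mathcal{F}(U)$, bilinearity of $\varphi^U$ yields $\varphi^U(s, aT^U(t)+ bT^U(t'))= \varphi^U(S^U(s), at+ bt')$ for every $s\in \mathcal{E}(U)$; comparing this with the defining relation for $T^U(at+ bt')$ and invoking non-degeneracy (Definition \ref{def05}) forces $T^U(at+ bt')= aT^U(t)+ bT^U(t')$. The same non-degeneracy argument delivers uniqueness of $T^U$: if $\widetilde{T}^U$ also satisfies the transpose relation, then $\varphi^U(s, T^U(t)- \widetilde{T}^U(t))= 0$ for all $s$, whence $T^U(t)= \widetilde{T}^U(t)$ for all $t$. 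Carrying this out for every open $U$ produces the required uniquely determined family $T\equiv (T^U)\in \prod_{X\supseteq U,\ open}\mbox{End}_{\mathcal{A}(U)}\mathcal{F}(U)$.

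For the final assertion I would simply observe that, by the very Definition of transposes of $S\equiv (S^U)\in \mbox{End}_\mathcal{A}\mathcal{E}$ and $T\equiv (T^U)\in \mbox{End}_\mathcal{A}\mathcal{F}$, being transposes as sheaf morphisms means precisely that $\varphi^U(s, T^U(t))= \varphi^U(S^U(s), t)$ holds for every open $U$ and all sections $s, t$. Since each component $T^U$ was constructed to satisfy exactly this relation, once the family $(T^U)$ is assumed to assemble into a genuine $\mathcal{A}$-endomorphism $T:\mathcal{F}\longrightarrow \mathcal{F}$, it is automatically the transpose of $S$ in the sheaf-morphism sense.

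I expect the main obstacle, and the reason the last sentence of the theorem is phrased conditionally, to be the compatibility of the constructed family with the restriction maps. A priori $(T^U)$ lives only in the product $\prod_U \mbox{End}_{\mathcal{A}(U)}\mathcal{F}(U)$, and to show $(T^U(t))|_V= T^V(t|_V)$ for $V\subseteq U$ one would want to check that $(T^U(t))|_V$ satisfies the defining relation of $T^V(t|_V)$, namely $\varphi^V(s', (T^U(t))|_V)= \varphi^V(S^V(s'), t|_V)$ for all $s'\in \mathcal{E}(V)$. Because $s'$ ranges over sections on $V$ that need not extend to $U$, this does not follow formally from the relation over $U$ alone; it is exactly here that one must exploit the local freeness of $\mathcal{E}$ and $\mathcal{F}$ together with the compatibility of $\varphi$ and $S$ with restrictions, which is the delicate point and the reason the gluing into a sheaf morphism is taken as a hypothesis rather than derived.
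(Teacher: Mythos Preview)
Your proposal is correct and follows essentially the same route as the paper: define $\psi^U_t(s)=\varphi^U(S^U(s),t)\in\mathcal{E}^\ast(U)$ and invoke the isomorphism $\vartheta^U$ of Proposition~\ref{prop01} to obtain the unique $T^U(t)$ satisfying the transpose relation, then check $\mathcal{A}(U)$-linearity of $T^U$. Your treatment is in fact more explicit than the paper's, which refers to Curtis for the linearity check and does not spell out the final sheaf-morphism assertion or the restriction-compatibility issue you correctly identify as the reason that assertion is stated conditionally.
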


\begin{proof} We have to show that for all open $U\subseteq X$ and $t\in
\mathcal{F}(U)$, there exists a unique element $t'\in
\mathcal{F}(U)$ such that
\begin{equation}\begin{array}{ll}\varphi^U(s, t')= \varphi^U(S^U(s),
t), & s\in
\mathcal{E}(U),\end{array}\label{transpose}\end{equation}so that
we can define $T^U(t)$ to be $t'$. For each open $U\subseteq X$,
$S^U\in \mbox{End}_{\mathcal{A}(U)}\mathcal{E}(U)$; because of the
latter the mapping \[\psi^U: s\longmapsto \varphi^U(S^U(s), t)\]is
an element of $\mathcal{E}^\ast(U)= \mathcal{E}(U)$. By
Proposition \ref{prop01}, there exists a unique element $t'\in
\mathcal{F}(U)$ such that \[\vartheta^U(t')(=\alpha^U_{t'})=
\psi^U;\]so for all $s\in \mathcal{E}(U)$, we have \[\varphi^U(s,
t')= \varphi^U(S^U(s), t).\] Now, define $T^U:
\mathcal{F}(U)\longrightarrow \mathcal{F}(U)$ as $T^U(r)= r'$,
where $r'$ is the solution of the equation obtained  by
substituting $r$ for $t$ in (\ref{transpose}). Then, we have
\[\varphi^U(s, T^U(t))= \varphi^U(S^U(s), t)\]for all $s\in
\mathcal{E}(U)$ and $t\in \mathcal{F}(U)$. The mapping $T^U$ is an
$\mathcal{A}(U)$-endomorphism of $\mathcal{F}(U)$. (The details of
checking that $T^U\in \mbox{End}_{\mathcal{A}(U)}\mathcal{E}(U)$
are presented in the proof of Theorem (27.7), \cite{curtis},
p.239.)
\end{proof}

\begin{lem}
Let $\mathcal{E}$ and $\mathcal{F}$ be dual free
$\mathcal{A}$-modules, with respect to a non-degenerate bilinear
$\mathcal{A}$-morphism $\varphi$, and $\mathcal{G}$ a
sub-$\mathcal{A}$-module of $\mathcal{E}$. For all open
$U\subseteq X$, let
\[\mathcal{G}(U)^\perp:= \{t\in \mathcal{F}(U):\
\varphi^U(s, t)= 0,\ \mbox{for all $s\in \mathcal{G}(U)$}\}.\]
Moreover, let $(\sigma^\perp)^U_V:
\mathcal{G}(U)^\perp\longrightarrow \mathcal{G}(V)^\perp$, where
$V\subseteq U$, with $V$ and $U$ open in $X$, be mappings
\[(\sigma^\perp)^U_V:=
\rho^U_V|_{\mathcal{G}(U)^\perp},\]where the $\rho^U_V$ are the
restriction maps for the $($complete$)$ presheaf of sections
$(\mathcal{F}(U), \rho^U_V)$. Then, the correspondence
\begin{equation}U\longmapsto
\mathcal{G}(U)^\perp,\label{an}\end{equation} along with the maps
$(\sigma^\perp)^U_V$, yields a complete presheaf of
$\mathcal{A}$-modules on $X$.\label{lem01}
\end{lem}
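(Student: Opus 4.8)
The plan is to follow the pattern of the proof of Lemma \ref{lsub} almost verbatim, with the role played there by the complete presheaf $\mathcal{E}^\ast$ taken over by the complete presheaf of sections $(\mathcal{F}(U), \rho^U_V)$ of $\mathcal{F}$, and the evaluation $t(s)$ replaced by the pairing $\varphi^U(s,t)$. The one structural fact that drives everything is that $\varphi$, being an $\mathcal{A}$-morphism, commutes with restrictions, i.e. $\varphi^V(s|_V, t|_V) = \varphi^U(s,t)|_V$ for $s \in \mathcal{E}(U)$, $t \in \mathcal{F}(U)$ and $V \subseteq U$ open. I would organize the argument into the module structure, the presheaf property (well-definedness of the restriction maps), and the two completeness axioms $(S1)$ and $(S2)$.

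First I would record that each $\mathcal{G}(U)^\perp$ is an $\mathcal{A}(U)$-submodule of $\mathcal{F}(U)$: for $t,t' \in \mathcal{G}(U)^\perp$ and $a \in \mathcal{A}(U)$, bilinearity of $\varphi^U$ in the second slot gives $\varphi^U(s, t + a\cdot t') = \varphi^U(s,t) + a\cdot \varphi^U(s,t') = 0$ for every $s \in \mathcal{G}(U)$. For the presheaf structure it remains to check that $(\sigma^\perp)^U_V = \rho^U_V|_{\mathcal{G}(U)^\perp}$ takes its values in $\mathcal{G}(V)^\perp$. Given $t \in \mathcal{G}(U)^\perp$ and $s' \in \mathcal{G}(V)$, I would verify $\varphi^V(s', t|_V) = 0$ by checking it on the stalks over the points of $V$ and closing up by the sheaf property of $\mathcal{A}$; restriction-compatibility of $\varphi$ reduces the germ-wise vanishing to the annihilation already known for $t$, so that $t|_V \in \mathcal{G}(V)^\perp$. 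Functoriality of the $(\sigma^\perp)^U_V$ is then inherited from that of the $\rho^U_V$, and each is an $\mathcal{A}(U)$-morphism for the same reason.

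For axiom $(S1)$ the argument is immediate, exactly as in Lemma \ref{lsub}: since $\mathcal{G}(U)^\perp \subseteq \mathcal{F}(U)$ and $(\mathcal{F}(U), \rho^U_V)$ is a complete presheaf, any two $t, t' \in \mathcal{G}(U)^\perp$ whose restrictions to the members $U_\alpha$ of an open cover $\mathcal{U} = \{U_\alpha\}_{\alpha \in I}$ of $U$ agree must already coincide in $\mathcal{F}(U)$, hence in $\mathcal{G}(U)^\perp$.

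The real work, and the step I expect to be the main obstacle, is axiom $(S2)$. Given a family $(t_\alpha) \in \prod_\alpha \mathcal{G}(U_\alpha)^\perp$ that is compatible on the overlaps $U_{\alpha\beta}$, I would first glue it to a section $t \in \mathcal{F}(U)$ with $t|_{U_\alpha} = t_\alpha$, using the completeness of $(\mathcal{F}(U), \rho^U_V)$; the content is then to show $t \in \mathcal{G}(U)^\perp$. Fix $s \in \mathcal{G}(U)$. Since $\mathcal{G}$ is a sub-$\mathcal{A}$-module we have $s|_{U_\alpha} \in \mathcal{G}(U_\alpha)$, so restriction-compatibility of $\varphi$ yields
\[
\varphi^U(s,t)|_{U_\alpha} = \varphi^{U_\alpha}(s|_{U_\alpha}, t_\alpha) = 0
\]
for every $\alpha$, because $t_\alpha \in \mathcal{G}(U_\alpha)^\perp$. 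As $\{U_\alpha\}$ covers $U$ and $\mathcal{A}$ is a sheaf (its presheaf of sections satisfies $(S1)$), the section $\varphi^U(s,t) \in \mathcal{A}(U)$ must vanish, whence $t \in \mathcal{G}(U)^\perp$. The delicate point to watch is that the annihilation condition cannot be attacked directly over $U$ but must be localized over the covering and then closed up by the sheaf property of $\mathcal{A}$; once this localization is made the verification is routine.
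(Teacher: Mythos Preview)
Your proposal is correct and follows essentially the same approach as the paper: the paper's own proof simply notes that the restriction makes sense (invoking Proposition~\ref{prop01}) and then declares that ``the rest of the proof is similar to the proof of Lemma~\ref{lsub}.'' You have written out precisely that similar proof in detail, with the evaluation $t(s)$ replaced by the pairing $\varphi^U(s,t)$ and the complete presheaf $\mathcal{E}^\ast$ replaced by the complete presheaf of sections of $\mathcal{F}$, exactly as the paper intends.
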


\begin{proof} We first notice, by virtue of Proposition \ref{prop01}, that,
for all open subset $U\subseteq X$, $\mathcal{F}(U)$ is
$\mathcal{A}(U)$-isomorphic to $\mathcal{E}(U)$, so that
restricting a map such as $\rho^U_V: \mathcal{E}(U)\longrightarrow
\mathcal{E}(V)$ to $\mathcal{G}(U)^\perp$ makes sense.

The rest of the proof is similar to the proof of Lemma \ref{lsub}.
\end{proof}

Lemma \ref{lem01} makes the following definition rather natural.

\begin{mydf}
\emph{Let $\mathcal{E}$, $\mathcal{G}$ and $\mathcal{F}$ be as in
Lemma \ref{lem01}. We denote by
\[\mathcal{G}^\perp\]the sheaf on $X$ generated by the
$($complete$)$ presheaf defined by $(\ref{an})$. We call it the
\textbf{$\mathcal{A}$-annihilator sheaf} of $\mathcal{G}$ with
respect to the non-degenerate bilinear $\mathcal{A}$-morphism
$\varphi$ (or $\mathcal{A}_\varphi$-\textbf{annihilator}) of
$\mathcal{G}$.}\hfill$\square$
\end{mydf}

\begin{cor}
For any $\varphi$-dual free $\mathcal{A}$-modules $\mathcal{E}$
and $\mathcal{F}$ on $X$, the annihilator sheaf
$\mathcal{G}^\perp$ of a subsheaf $\mathcal{G}$ of $\mathcal{E}$,
as defined above, is an $\mathcal{A}$-module on $X$.
\end{cor}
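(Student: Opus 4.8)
The plan is to realize $\mathcal{G}^\perp$ as an \emph{open subsheaf} of the $\mathcal{A}$-module $\mathcal{F}$ and then to transport the $\mathcal{A}$-module structure from $\mathcal{F}$ along this inclusion. First I would recall, from Lemma \ref{lem01}, that the correspondence $U \longmapsto \mathcal{G}(U)^\perp$ together with the maps $(\sigma^\perp)^U_V$ already constitutes a \emph{complete} presheaf of $\mathcal{A}$-modules, and observe that by its very definition it is a sub-presheaf of the complete presheaf of sections $(\mathcal{F}(U), \rho^U_V)$ of $\mathcal{F}$, since $\mathcal{G}(U)^\perp \subseteq \mathcal{F}(U)$ and $(\sigma^\perp)^U_V = \rho^U_V|_{\mathcal{G}(U)^\perp}$. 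Consequently the generated sheaf $\mathcal{G}^\perp$ is a subsheaf of $\mathcal{F}$, whence, by Lemma \ref{eqd}, it is an \emph{open} subset of $\mathcal{F}$.

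Next I would verify the fiberwise module condition, i.e. condition $(v)$ in the definition of an $\mathcal{A}$-module. Passing to stalks, one has $(\mathcal{G}^\perp)_x = \varinjlim_{U \ni x}\mathcal{G}(U)^\perp$, the inductive limit of the $\mathcal{A}(U)$-modules $\mathcal{G}(U)^\perp$ taken over the directed system of open neighborhoods of $x$. Since $\mathcal{A}_x = \varinjlim_{U \ni x}\mathcal{A}(U)$, this inductive limit is an $\mathcal{A}_x$-module, so each fiber $(\mathcal{G}^\perp)_x$ is an $\mathcal{A}_x$-module; the abelian-group structure on the fibers, condition $(iv)$, is inherited in exactly the same way. (Alternatively, completeness of the presheaf together with the section-level identification used earlier for relation $(\ref{anni})$ gives $\mathcal{G}^\perp(U) = \mathcal{G}(U)^\perp$ as $\mathcal{A}(U)$-modules, which is an equivalent route to the same conclusion.)

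It then remains to settle the action and its continuity, condition $(vi)$. At the presheaf level the scalar action is closed on annihilators: if $a \in \mathcal{A}(U)$ and $t \in \mathcal{G}(U)^\perp$, then for every $s \in \mathcal{G}(U)$ the bilinearity of $\varphi$ yields $\varphi^U(s, a \cdot t) = a \cdot \varphi^U(s, t) = 0$, so $a \cdot t \in \mathcal{G}(U)^\perp$. Hence the continuous left action $\mathcal{A} \circ \mathcal{F} \longrightarrow \mathcal{F}$ restricts to a map $\mathcal{A} \circ \mathcal{G}^\perp \longrightarrow \mathcal{G}^\perp$. I would then invoke the openness established in the first step: because $\mathcal{G}^\perp$ carries the subspace topology of the open set $\mathcal{G}^\perp \subseteq \mathcal{F}$, the restriction to the open subspace $\mathcal{G}^\perp$ of the continuous action on $\mathcal{F}$ is again continuous, and since it takes values in $\mathcal{G}^\perp$ it is continuous as a map into $\mathcal{G}^\perp$. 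This verifies $(vi)$ and identifies $\mathcal{G}^\perp$ as an $\mathcal{A}$-module on $X$.

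The part I expect to be the genuine obstacle is precisely this last continuity check: unlike the purely algebraic fiber conditions $(iv)$ and $(v)$, it is a topological statement about the sheaf space rather than about sections or stalks, and it is exactly here that Lemma \ref{eqd} (subsheaves are open subsets, and conversely) does the essential work, by allowing me to transfer continuity from the ambient $\mathcal{A}$-module $\mathcal{F}$ to the open subspace $\mathcal{G}^\perp$ instead of re-establishing it from scratch through the generating presheaf.
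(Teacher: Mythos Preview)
Your proposal is correct and follows essentially the same line the paper intends: the paper states this corollary without proof, treating it as an immediate consequence of Lemma \ref{lem01}, which already shows that $(\mathcal{G}(U)^\perp,(\sigma^\perp)^U_V)$ is a complete presheaf of $\mathcal{A}$-modules and a sub-presheaf of the sections of $\mathcal{F}$; in the Mallios framework the sheaf generated by such a presheaf is automatically an $\mathcal{A}$-module. Your argument simply unpacks this last step explicitly, verifying conditions $(iv)$--$(vi)$ by hand via Lemma \ref{eqd} and restriction of the continuous action from $\mathcal{F}$, which is exactly the content the paper is tacitly invoking.
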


We are now set for the \textit{main theorem} on
$\mathcal{A}$-annihilator sheaves. The results of the analog
theorem in classical module theory can be found in
Curtis[\cite{curtis}, pp. 240-242], Adkins and
Weintraub[\cite{adkins}, pp. 345-349]. But before we state the
theorem, we open a breach for the analog of a submodule of a
module $M$, generated by the set $\cup_{i\in I}M_i$, where every
$M_i$ is a submodule of $M$.

\begin{lem}
Let $\mathcal{E}$ be an $\mathcal{A}$-module on $X$, and
$(\mathcal{F}_i)_{i\in I}$ a family of sub-$\mathcal{A}$-modules of
$\mathcal{E}$. For every open $U\subseteq X$, let \[F(U):=
\langle\cup_{i\in I}\mathcal{F}_i(U)\rangle,\]that is $F(U)$ is the
$\mathcal{A}(U)$-submodule of $\mathcal{E}(U)$, generated by
$\cup_{i\in I}\mathcal{F}_i(U)$, i.e. $F(U)$ is the sum of the
family $(\mathcal{F}_i(U))_{i\in I}$. The presheaf, given by
\begin{equation}U\mapsto F(U):= \langle\cup_{i\in
I}\mathcal{F}_i(U)\rangle,\label{eq12}\end{equation}where $U$ runs
over the open subsets of $X$, along with restriction maps
$\sigma^U_V= {\rho^U_V}|_{\mathcal{F}(U)}$ $(( \mathcal{E}(U),
\rho^U_V)$ is the presheaf of sections of $\mathcal{E}$ $)$, is
complete. \label{lem03}
\end{lem}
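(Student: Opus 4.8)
The plan is to verify the two sheaf axioms $(S1)$ and $(S2)$ for the presheaf $U \mapsto F(U) := \langle \cup_{i\in I}\mathcal{F}_i(U)\rangle$, exactly as was done for the annihilator presheaf in Lemma \ref{lsub}, using as the ambient complete presheaf the presheaf of sections $(\mathcal{E}(U), \rho^U_V)$ of $\mathcal{E}$. Since $F(U) \subseteq \mathcal{E}(U)$ and the restriction maps $\sigma^U_V$ are by definition the restrictions of $\rho^U_V$, the separation axiom $(S1)$ is immediate: if $s, t \in F(U)$ restrict equally over every member $U_\alpha$ of an open cover $\mathcal{U} = \{U_\alpha\}$ of $U$, then $\rho^U_{U_\alpha}(s) = \rho^U_{U_\alpha}(t)$ for all $\alpha$, and completeness of $(\mathcal{E}(U), \rho^U_V)$ forces $s = t$.

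The substantive step is the gluing axiom $(S2)$. First I would take a matching family $(t_\alpha) \in \prod_\alpha F(U_\alpha)$ with $\rho^{U_\alpha}_{U_{\alpha\beta}}(t_\alpha) = \rho^{U_\beta}_{U_{\alpha\beta}}(t_\beta)$ on every overlap $U_{\alpha\beta}$. Viewing each $t_\alpha$ as an element of $\mathcal{E}(U_\alpha)$ and invoking completeness of the presheaf of sections of $\mathcal{E}$, there exists a unique $t \in \mathcal{E}(U)$ with $\rho^U_{U_\alpha}(t) = t_\alpha$ for all $\alpha$. The remaining task — and I expect this to be the one place where the argument differs genuinely from Lemma \ref{lsub} — is to show that this glued section $t$ actually lies in the submodule $F(U)$, not merely in $\mathcal{E}(U)$. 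This is the main obstacle, because $F(U) = \langle \cup_i \mathcal{F}_i(U)\rangle$ is defined by a local generation condition, and membership in a generated submodule is not obviously a local property in the way that the annihilation condition $t(s)=0$ of Lemma \ref{lsub} was.

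To overcome this I would argue via the sheaf $\mathcal{F}$ generated by the presheaf $(F(U), \sigma^U_V)$. The stalk $\mathcal{F}_x$ is the $\mathcal{A}_x$-submodule of $\mathcal{E}_x$ generated by $\cup_i (\mathcal{F}_i)_x$, so $\mathcal{F}$ is the image subsheaf $\sum_i \mathcal{F}_i$ of $\mathcal{E}$ and is genuinely a sub-$\mathcal{A}$-module of $\mathcal{E}$ by Lemma \ref{eqd}. The key point is that the germs of the matching sections $t_\alpha$ all lie in $\mathcal{F}_x$ for every $x \in U$, since each $t_\alpha \in F(U_\alpha)$; hence the germs of the glued section $t$ lie in $\mathcal{F}$ over all of $U$, i.e. $t \in \mathcal{F}(U) = \Gamma(U, \mathcal{F})$. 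It then remains to identify $\Gamma(U, \mathcal{F})$ with $F(U)$ itself; I would appeal to the fact (Proposition 11.1 of Mallios, already used to establish $(\ref{anni})$) that for a complete presheaf the sections of the generated sheaf recover the presheaf values, combined with the local finiteness allowing a section with germs in $\mathcal{F}$ to be written, on each member of a suitable refinement of $\mathcal{U}$, as a finite $\mathcal{A}$-combination of elements of the $\mathcal{F}_i$. Once $t \in F(U)$ is secured, $(S2)$ holds and the presheaf is complete, finishing the proof.
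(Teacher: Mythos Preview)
Your overall architecture matches the paper's proof exactly: $(S1)$ is inherited from the complete presheaf of sections of $\mathcal{E}$, and for $(S2)$ one glues the matching family $(t_\alpha)$ inside $\mathcal{E}$ to obtain $t\in\mathcal{E}(U)$, after which the only issue is to show $t\in F(U)$.

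Where you diverge from the paper is precisely in that last step, and your route has a genuine gap. You pass to the sheafification $\mathcal{F}$, observe that all germs of $t$ lie in $\mathcal{F}$, conclude $t\in\Gamma(U,\mathcal{F})$, and then try to identify $\Gamma(U,\mathcal{F})$ with $F(U)$ by invoking Proposition~11.1 of Mallios. But that proposition says $\Gamma(U,\mathcal{F})=F(U)$ \emph{only once the presheaf $F$ is known to be complete}, which is exactly the statement you are proving; so this appeal is circular. Your fallback (``local finiteness allowing a section with germs in $\mathcal{F}$ to be written, on each member of a suitable refinement, as a finite $\mathcal{A}$-combination'') does not close the gap either: finite $\mathcal{A}$-expressions on the pieces of a cover do not by themselves assemble into a single finite $\mathcal{A}$-expression over $U$.

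The paper avoids the detour through sheafification and argues directly by contradiction at the germ level. Assuming $t\notin F(U)$, it locates a point $x\in U$ at which the germ $t_x$ cannot be written as a finite $\mathcal{A}_x$-combination of germs coming from the $\mathcal{F}_{i_k}$; but $x\in U_\alpha$ for some $\alpha$, and $t|_{U_\alpha}=t_\alpha=\sum_{k=1}^{m}a_{i_k}t^{i_k}$ with $a_{i_k}\in\mathcal{A}(U_\alpha)$ and $t^{i_k}\in\mathcal{F}_{i_k}(U_\alpha)$, which furnishes such an expression for $t_x$ and yields the contradiction. In other words, the paper works pointwise with germs rather than going through $\Gamma(U,\mathcal{F})$, so Proposition~11.1 is never invoked and the circularity does not arise. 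You should replace your sheafification paragraph with this direct germ argument.
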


\begin{proof}
That $(F(U), \sigma^U_V)$ is a presheaf of $\mathcal{A}$-modules on
$X$ is easy to see. To see that the presheaf $(F(U), \sigma^U_V)$ is
complete, we need check axioms $(S1)$ and $(S2)$ in
Mallios[\cite{mallios}, pp. 46-47]. It is easy to see that axiom
$(S1)$ is satisfied. To verify that axiom $(S2)$ is satisfied, let
$U$ be an open subset of $X$ and $\mathcal{U}= (U_\alpha)_{\alpha\in
I}$ an open covering of $\mathcal{U}$. Furthermore, let $(t_\alpha)=
\prod_\alpha F(U_\alpha)$ be such that for any
$U_{\alpha\beta}\equiv U_\alpha\cap U_\beta\neq \emptyset$ in
$\mathcal{U}$, one has
\[\sigma^{U_\alpha}_{U_{\alpha\beta}}(t_\alpha)\equiv
{t_\alpha}|_{U_{\alpha\beta}}= {t_\beta}|_{U_{\alpha\beta}}\equiv
\sigma^{U_\beta}_{U_{\alpha\beta}}(t_\beta) .\]Since
$\sigma^{U_\alpha}_{U_{\alpha\beta}}=
\rho^{U_\alpha}_{U_{\alpha\beta}}|_{F(U_{\alpha})}$,
$F(U_\alpha)\subseteq \mathcal{E}(U_\alpha)$ for all $\alpha,
\beta\in I$, and the presheaf $(\mathcal{E}(U), \rho^U_V)$ is
complete, there exists an element $t\in \mathcal{E}(U)$ such that
\[\rho^U_{U_\alpha}(t)\equiv t|_{U_\alpha}= t_\alpha,\]for every
$\alpha\in I$. It remains to show that $t$ is indeed an element of
$F(U)$. Suppose that $t$ is not an element of $F(U)$, so it follows
that $t$ cannot be written as $t= \sum_{k\in J\subseteq
I}^ma_{i_k}t^{i_k}$, with $J$ finite, $a_{i_k}\in \mathcal{A}(U)$,
and $t^{i_k}\in \mathcal{F}_{i_k}(U)$. This means that for some
$x\in U$, $t_x\equiv t(x)$ cannot be written as a linear combination
of finitely many $t_x^{i_k}\equiv t^{i_k}(x)$, where $t^{i_k}\in
\mathcal{F}_{i_k}(U)$ and $k\in J$ with $J$ a finite subset of $I$.
But this is a contradiction as $x\in U_\alpha$ for some $\alpha\in
I$, and $t|_{U_\alpha}= \sum_{k=1}^ma_{i_k}t^{i_k}$, where $m\in
\mathbb{N}$, $a_{i_k}\in \mathcal{A}(U_\alpha)$, and $t^{i_k}\in
\mathcal{F}_{i_k}(U_\alpha)$. Thus, $t\in F(U)$, and the proof is
finished.
\end{proof}

\begin{mydf}
\emph{Keeping with the notations of Lemma \ref{lem03}, we denote by
\[\mathcal{F}\equiv \sum_{i\in I}\mathcal{F}_i\]the
sub-$\mathcal{A}$-module, on $X$, of $\mathcal{E}$, generated by the
presheaf defined by $(\ref{eq12})$. We call the
\textit{sub-$\mathcal{A}$-module} $\sum_{i\in I}\mathcal{F}_i$ the
\textbf{sum of the family} $(\mathcal{F}_i)_{i\in I}$. In the case
where the index set $I$ is finite, say $I= \{1, \ldots, m\}$, we
shall often write $\sum_{i\in I}\mathcal{F}_i$ as
$\sum_{i=1}^m\mathcal{F}_i$, or $\mathcal{F}_1+\ldots +
\mathcal{F}_m$.} \hfill$\square$
\end{mydf}

On another side, it is readily verified that :
\begin{quote}
Given an $\mathcal{A}$-module $\mathcal{E}$ and a family
$(\mathcal{F}_i)_{i\in I}$ of sub-$\mathcal{A}$-modules, the
correspondence \begin{equation}U\longmapsto (\cap_{i\in
I}\mathcal{F}_i)(U)\equiv \cap_{i\in
I}(\mathcal{F}_i(U)),\label{eq13}\end{equation}where $U$ is any
open set in $X$, along with the obvious restriction maps yield a
\textit{complete presheaf of $\mathcal{A}$-modules} on $X$.
\end{quote}
The sheaf generated by the presheaf given by (\ref{eq13}) is
called the \textbf{intersection sub-$\mathcal{A}$-module} of the
\textit{family} $(\mathcal{F}_i)_{i\in I}$ and is denoted
\[\cap_{i\in I}\mathcal{F}_i.\]Thus, based on
Mallios[\cite{mallios}, Proposition 11.1, p. 51], one has
\[(\cap_{i\in I}\mathcal{F}_i)(U)= \cap_{i\in
I}(\mathcal{F}_i(U))\]for every open set $U\subseteq X$.

\begin{theo}\label{theo0.1}
Let $\mathcal{E}$ and $\mathcal{F}$ be free $\mathcal{A}$-modules
of finite rank on $X$, dual with respect to a non-degenerate
bilinear $\mathcal{A}$-form $\varphi$, and let $\mathcal{G}$ and
$\mathcal{H}$ be sub-$\mathcal{A}$-modules of $\mathcal{E}$. Then
\begin{enumerate}
\item[{$(a)$}] $\dim \mathcal{G}(U)+ \dim \mathcal{G}^\perp(U)=
\dim \mathcal{E}(U)$, for all open $U\subseteq X$. \item [{$(b)$}]
$(\mathcal{G}^\perp)^\perp= \mathcal{G}.$ \item [{$(c)$}]
$\mathcal{G}^\perp\cap \mathcal{H}^\perp= (\mathcal{G}+
\mathcal{H})^\perp$. \item [{$(d)$}] $(\mathcal{G}\cap
\mathcal{H})^\perp= \mathcal{G}^\perp+ \mathcal{H}^\perp.$ \item
[{$(e)$}]\footnote{Assertion $(e)$ is otherwise stated as
$\mathcal{E}$ and $\mathcal{F}$ having isomorphic
``\textit{projective geometries}", that is $p(\mathcal{E})$ is
isomorphic to $p(\mathcal{F})$. See Gruenberg and
Weir[\cite{weir}, p.29].} The mapping $\mathcal{G}\longmapsto
\mathcal{G}^\perp$ is a one-to-one mapping of the set of
sub-$\mathcal{A}$-modules of $\mathcal{E}$ onto the set of
sub-$\mathcal{A}$-modules of $\mathcal{F}$, such that
$\mathcal{G}\subseteq \mathcal{H}$ implies that
$\mathcal{G}^\perp\supseteq \mathcal{H}^\perp$. \item [{$(f)$}] If
$\mathcal{E}= \mathcal{G}\oplus \mathcal{H}$, then $\mathcal{F}=
\mathcal{G}^\perp\oplus \mathcal{H}^\perp.$ \item [{$(g)$}] The
$\mathcal{A}$-modules $\mathcal{G}$ and
$\mathcal{F}/\mathcal{G}^\perp$ are dual $\mathcal{A}$-modules
with respect to the non-degenerate bilinear $\mathcal{A}$-form
$\widetilde{\varphi}$, defined by
\[\widetilde{\varphi}^U(s, [t])= \varphi^U(s, t)\]
for all $s\in \mathcal{G}(U)$ and $\dot t\in
(\mathcal{F}/\mathcal{G}^\perp)(U)=
\mathcal{F}(U)/\mathcal{G}^\perp(U)$, with $U$ running over the
open subsets of $X$. \item [{$(h)$}] Suppose that $S\in
\mbox{End}_\mathcal{A}\mathcal{E}$ and $T\in
\mbox{End}_{\mathcal{A}}\mathcal{F}$ are transposes of each other
with respect to $\varphi$, and suppose that $\mathcal{G}$ is an
\textsf{$S$-invariant} sub-$\mathcal{A}$-module of $\mathcal{E}$,
i.e. \[S^U(s)\in \mathcal{G}(U)\]for all $s\in \mathcal{G}(U)$ and
$U$ open in $X$. Then, $\mathcal{G}^\perp$ is $T$-invariant, and
the restriction $S|_{\mathcal{G}}\equiv
\left(S^U|_{\mathcal{G}(U)}\right)_{X\supseteq U, open}$ and the
induced $\mathcal{A}$-morphism $T^\ast\equiv
T_{\mathcal{F}/\mathcal{G}^\perp}$ are transposes of each other
with respect to the bilinear $\mathcal{A}$-form
$\widetilde{\varphi}$, defined in part $(g)$.
\end{enumerate}
\end{theo}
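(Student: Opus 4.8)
The plan is to reduce every assertion to its sectionwise counterpart over each open $U\subseteq X$ and then lift back to sheaves. The crucial bridge is that all three constructions occurring in (a)--(h) --- the annihilator $\mathcal{G}^\perp$, the sum $\mathcal{G}+\mathcal{H}$, and the intersection $\mathcal{G}\cap\mathcal{H}$ --- were manufactured from \emph{complete} presheaves (Lemma \ref{lem01}, Lemma \ref{lem03}, and the completeness of the intersection presheaf (\ref{eq13})). Hence, by Mallios[\cite{mallios}, Proposition 11.1, p.51], taking sections over $U$ commutes with all three operations: $\mathcal{G}^\perp(U)=\mathcal{G}(U)^\perp$, $(\mathcal{G}+\mathcal{H})(U)=\langle\mathcal{G}(U)\cup\mathcal{H}(U)\rangle$, and $(\mathcal{G}\cap\mathcal{H})(U)=\mathcal{G}(U)\cap\mathcal{H}(U)$. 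Moreover, two subsheaves of the fixed ambient sheaf ($\mathcal{E}$ or $\mathcal{F}$) coincide precisely when they have equal sections over every open set. Thus it suffices to establish, for each fixed open $U$, the corresponding statement about the free $\mathcal{A}(U)$-modules $\mathcal{E}(U)$ and $\mathcal{F}(U)$, which by Proposition \ref{prop01} are $\varphi^U$-dual; these are exactly the hypotheses of the classical main theorem on annihilators, Curtis[\cite{curtis}, pp.240--242], so each part is obtained by invoking its classical analogue over $\mathcal{A}(U)$ and then quantifying over $U$.

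Carried out in order: first I would prove (a), the rank formula, directly from the non-degeneracy of $\varphi^U$ and the freeness of $\mathcal{E}(U),\mathcal{F}(U)$, following the rank-comparison technique already used in Proposition \ref{prop01}. Next, (b) $(\mathcal{G}^\perp)^\perp=\mathcal{G}$ follows sectionwise from non-degeneracy once one checks the inclusion $\mathcal{G}(U)\subseteq(\mathcal{G}(U)^\perp)^\perp$ and compares ranks via (a). Part (c) is the elementary sectionwise identity $\mathcal{G}(U)^\perp\cap\mathcal{H}(U)^\perp=\langle\mathcal{G}(U)\cup\mathcal{H}(U)\rangle^\perp$, and then (d) is deduced from (b) and (c) by annihilating both sides and applying double-annihilation. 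For (e) the order-reversal $\mathcal{G}\subseteq\mathcal{H}\Rightarrow\mathcal{G}^\perp\supseteq\mathcal{H}^\perp$ is immediate from the definition while bijectivity is (b); and (f) combines (c) and (d) via $\mathcal{G}^\perp+\mathcal{H}^\perp=(\mathcal{G}\cap\mathcal{H})^\perp=\mathcal{F}$ and $\mathcal{G}^\perp\cap\mathcal{H}^\perp=(\mathcal{G}+\mathcal{H})^\perp=\mathcal{E}^\perp=0$, which forces the internal direct sum.

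For (g) I would first verify that $\widetilde{\varphi}^U$ is well defined --- if $[t]=[t']$ then $t-t'\in\mathcal{G}^\perp(U)$, so $\varphi^U(s,t-t')=0$ for $s\in\mathcal{G}(U)$ --- and then that it is non-degenerate in both slots: $\widetilde{\varphi}^U(s,[t])=0$ for all $[t]$ forces $s=0$ by non-degeneracy of $\varphi$, while vanishing for all $s\in\mathcal{G}(U)$ forces $t\in\mathcal{G}(U)^\perp=\mathcal{G}^\perp(U)$, i.e.\ $[t]=0$; the $\mathcal{A}$-morphism claim then follows from compatibility of $\widetilde{\varphi}^U$ with restriction. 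Finally (h) is a direct computation: $S$-invariance of $\mathcal{G}$ gives, for $t\in\mathcal{G}^\perp(U)$ and $s\in\mathcal{G}(U)$, that $\varphi^U(s,T^U(t))=\varphi^U(S^U(s),t)=0$ since $S^U(s)\in\mathcal{G}(U)$, whence $T^U(t)\in\mathcal{G}^\perp(U)$ and $\mathcal{G}^\perp$ is $T$-invariant; the transpose relation for $S|_{\mathcal{G}}$ and the induced map on $\mathcal{F}/\mathcal{G}^\perp$ with respect to $\widetilde{\varphi}$ then drops out of the same identity read modulo $\mathcal{G}^\perp(U)$.

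I expect the main obstacle to be part (a). The classical dimension formula rests on linear algebra over a \emph{field}, whereas here $\mathcal{A}(U)$ is only a commutative unital ring and a sub-$\mathcal{A}(U)$-module such as $\mathcal{G}(U)$ need not be free, so ``$\dim$'' must be read as the rank of a free module and one must justify that the ranks genuinely add. The safe route is to lean on the fact, already implicitly used in Proposition \ref{prop01}, that an injective $\mathcal{A}(U)$-morphism between free modules of finite rank forces the expected inequality of ranks, and to transport $\mathcal{G}^\perp(U)$ across the isomorphism $\vartheta^U:\mathcal{F}(U)\xrightarrow{\sim}\mathcal{E}(U)$ of that proposition so that the additivity is read inside a single free module $\mathcal{E}(U)$; the remaining bookkeeping is routine, and every other part then follows from (a)--(d) and the reduction principle of the first paragraph.
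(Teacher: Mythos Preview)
Your proposal is correct and follows essentially the same route as the paper: reduce each part to its sectionwise counterpart via the completeness of the annihilator, sum, and intersection presheaves, then appeal to the classical annihilator theorem in Curtis, with (b) coming from the inclusion plus the rank formula (a), (d) from (b) and (c), (e)--(f) formally from these, and (g)--(h) by the same direct verifications you outline. The only visible difference is in (a): the paper builds an explicit $\varphi^U$-dual basis $\{f_j^U\}$ (citing Blyth) and exhibits $\{f_{k+1}^U,\ldots,f_n^U\}$ as a basis of $\mathcal{G}(U)^\perp$, whereas you propose a rank-comparison argument---but both tacitly assume $\mathcal{G}(U)$ is free of some rank $k\le n$, so the obstacle you flag applies equally to the paper's own proof.
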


\begin{proof}
$(a)$ Suppose that the rank of $\mathcal{E}$ is $n$ $(n\in
\mathbb{N})$, i.e. $\mathcal{E}= \mathcal{A}^n$ within an
$\mathcal{A}$-isomorphism, so that $\mathcal{E}(U)$ is
$\mathcal{A}(U)$-isomorphic to $\mathcal{A}^n(U)$ for every open
$U\subseteq X$. Now, let us fix an open subset $U$ of $X$; then we
have \[\mathcal{G}(U)=\mathcal{A}^{k(U)}(U)\equiv
\mathcal{A}^k(U)\]within an $\mathcal{A}(U)$-isomorphism and such
that $1\leq k\leq n$. Next, let $\{e_i^U\}_{1\leq i\leq n}$ be the
canonical basis of $\mathcal{E}(U)$, obtained from the Kronecker
gauge $\{\varepsilon^U_i\}_{1\leq i\leq n}$ (cf.
Mallios[\cite{mallios}, p.123]) through the
$\mathcal{A}$-isomorphism $\mathcal{E}= \mathcal{A}^n$. Since
$\mathcal{F}(U)$ is $\mathcal{A}(U)$-isomorphic to
$\mathcal{E}(U)= \mathcal{E}(U)^\ast= \mathcal{E}^\ast(U)$, we can
find, see Blyth[\cite{blyth}, Theorem 9.1, p.116], a basis
$\{f^U_j\}_{1\leq j\leq n}$ such that, using Proposition
\ref{prop01}, we have
\[\varphi^U(e^U_i, f^U_j)= \left\{\begin{array}{cc} 0^U, & i\neq j\\
1^U, & i=j.\end{array}\right.\]We assert that $\{f^U_{k+1},
\ldots, f^U_n\}$ is a basis of $\mathcal{G}(U)^\perp$. This is
clearly established as $\{e^U_i\}_{1\leq i\leq n}$ is a basis of
$\mathcal{G}(U)$, $\varphi^U(e^U_i, f^U_j)= 0^U$, for all $1\leq
i\leq k$, $k+1\leq j\leq n$, and $f_{k+1}^U, \ldots, f^U_n$ are
linearly independent and generate $\mathcal{G}(U)^\perp$. To see
that $f^U_{k+1}, \ldots, f_n^U$ generate $\mathcal{G}(U)^\perp$,
let $s= \sum_{i=1}^n\alpha_if_i^U\in \mathcal{G}(U)^\perp$. Since
$\varphi^U(e^U_i, s)= 0^U$, $1\leq i\leq k$, we have that
$\alpha_i= 0$ for $1\leq i\leq k$. Thus, assertion $(a)$ is
proved.

$(b)$ We have, for every open $U\subseteq X$,
\[\mathcal{G}(U)\subseteq
(\mathcal{G}^\perp(U))^\perp= (\mathcal{G}(U)^\perp)^\perp\equiv
\mathcal{G}(U)^{\perp\perp}.\]From part $(a)$, we have, for all
open $U\subseteq X$,
\begin{eqnarray*} \dim (\mathcal{G}^\perp(U))^\perp & =
& \dim \mathcal{F}(U)- \dim \mathcal{G}^\perp(U) \\ & = & \dim
\mathcal{F}(U)- (\dim \mathcal{E}(U)- \dim \mathcal{G}(U)) \\ & =
& \dim \mathcal{G}(U),
\end{eqnarray*}form which we deduce that for all open $U\subseteq
X$, \[\mathcal{G}(U)= (\mathcal{G}^\perp(U))^\perp\]within an
$\mathcal{A}(U)$-isomorphism. Hence, the
$\mathcal{A}(U)$-isomorphisms $\mathcal{G}(U)=
(\mathcal{G}^\perp(U))^\perp$, $U$ running over the open subsets
of $X$, along with the restriction maps $\sigma^U_V$ yield a
complete presheaf, defined by
\[U\longmapsto \mathcal{G}^{\perp\perp}\equiv
(\mathcal{G}(U))^{\perp\perp}:= (\mathcal{G}^\perp(U))^\perp.\]It
follows that if $(\mathcal{G}^\perp)^\perp\equiv
\mathcal{G}^{\perp\perp}$ is the sheaf corresponding to the
preceding (complete) presheaf, then we have
\[(\mathcal{G}^\perp)^\perp=
\mathcal{G}\]within an $\mathcal{A}$-isomorphism.

$(c)$ For every open $U\subseteq X$, one has \begin{eqnarray*}
(\mathcal{G}^\perp\cap \mathcal{H}^\perp)(U) & = &
\mathcal{G}^\perp(U)\cap \mathcal{H}^\perp(U) \\ & = &
\mathcal{G}(U)^\perp\cap \mathcal{H}(U)^\perp \\ & = &
(\mathcal{G}(U)+ \mathcal{H}(U))^\perp \\ & = &
((\mathcal{G}+\mathcal{H})(U))^\perp\\ & = & (\mathcal{G}+
\mathcal{H})^\perp(U);
\end{eqnarray*}it follows that $\mathcal{G}^\perp\cap
\mathcal{H}^\perp= (\mathcal{G}+ \mathcal{H})^\perp$ within an
$\mathcal{A}$-isomorphism.

$(d)$ is shown by combining $(b)$ and $(c)$.

$(e)$ Clearly for all open $U\subseteq X$,
$\mathcal{G}(U)\subseteq \mathcal{H}(U)$ implies that
\[\mathcal{G}^\perp(U)=
\mathcal{G}(U)^\perp \supseteq \mathcal{H}(U)^\perp=
\mathcal{H}^\perp(U).\] So, if
\[\{\left({(\sigma^\perp)}\right)^U_V:
\mathcal{G}^\perp(U)\longrightarrow \mathcal{G}^\perp(V)|\
\mbox{$V$, $U$ are open in $X$ and $V\subseteq U$}\}\] is the set
of restriction maps for the (complete) presheaf of sections of the
annihilator sheaf $\mathcal{G}^\perp$, then by taking
\[\left({\lambda^\perp}\right)^U_V:=
\left({\sigma^\perp}\right)^U_V|_{\mathcal{H}^\perp(U)}=
\rho^U_V|_{\mathcal{H}^\perp(U)}\]we obtain the (complete)
presheaf of sections of the sheaf $\mathcal{H}^\perp$. Therefore,
we have $\mathcal{H}^\perp\subseteq \mathcal{G}^\perp$. For the
one-to-one property, suppose that $\mathcal{G}^\perp =
\mathcal{H}^\perp$. Applying $(b)$, we have
\[\mathcal{G}= \left(\mathcal{G}^\perp\right)^\perp=
\left(\mathcal{H}^\perp\right)^\perp= \mathcal{H},\]where the
previous equalities are actually $\mathcal{A}$-isomorphisms. The
proof that every sub-$\mathcal{A}$-module $\mathcal{N}$ of the
$\mathcal{A}$-module $\mathcal{F}$ has the form
$\mathcal{G}^\perp$ for some sub-$\mathcal{A}$-module
$\mathcal{G}$ of $\mathcal{E}$ is immediate. In effect, applying
$(b)$, we have
\[\mathcal{N}=
\left(\mathcal{N}^\perp\right)^\perp\]within an
$\mathcal{A}$-isomorphism. Taking $\mathcal{G}= \mathcal{N}^\perp$
corroborates the assertion.

$(f)$ It suffices to show that if $U$ is an open subset of $X$,
then $\mathcal{E}(U)= \mathcal{G}(U)\oplus \mathcal{H}(U)$ implies
that $\mathcal{F}(U)= \mathcal{G}^\perp(U)\oplus
(\mathcal{H})^\perp(U)$. But this is shown in
Curtis[\cite{curtis}, p.242, part $(d)$ of proof of Theorem
(27.12)].

$(g)$ That $\widetilde{\varphi}$ is well defined is immediate. In
fact, fix an open subset $U$ of $X$; then for every $s\in
\mathcal{G}(U)$ and $\dot t= \dot{t'}\in
(\mathcal{F}/(\mathcal{E})^\perp)(U)=
\mathcal{F}(U)/(\mathcal{G})^\perp(U)$, we have
\[\varphi^U(s, t')= \varphi^U(s, t+z)= \varphi^U(s, t)\]since
$s\in \mathcal{G}(U)$, and $t'= t+z$ with $z\in
\mathcal{G}(U)^\perp= (\mathcal{G})^\perp(U)$. it is obvious that
$\widetilde{\varphi}$ is bilinear. The proof that
$\widetilde{\varphi}^U$ is non-degenerate for all open $U\subseteq
X$ can be found in Curtis[\cite{curtis}, p242, part $(e)$ of proof
of Theorem (27.12)].

$(h)$ We show first that $T(\mathcal{G}^\perp)\subseteq
\mathcal{G}^\perp$, that is
\[T(\mathcal{G}^\perp)(U):=
T^U(\mathcal{G}^\perp(U))= T^U(\mathcal{G}(U)^\perp)\subseteq
\mathcal{G}^\perp(U)= \mathcal{G}(U)^\perp,\]for all open
$U\subseteq X$. Let us consider arbitrarily any open subset
$U\subseteq X$, and let $s\in \mathcal{G}(U)$ and $t\in
\mathcal{G}(U)^\perp$. Then, \[\varphi^U(s, T^U(t))=
\varphi^U(S^U(s), t)= 0^U,\]because $S^U(\mathcal{G}(U))\subseteq
\mathcal{G}(U)$; therefore $T^U(\mathcal{G}(U)^\perp)\subseteq
\mathcal{G}(U)^\perp.$ For the remaining part of $(f)$, we start
by noticing that for every open $U\subseteq X$,
\[(T^\ast)^U\circ q^U= q^U\circ T^U\]where $q$ is the quotient
$A$-morphism $\mathcal{F}\longrightarrow
\mathcal{F}/\mathcal{G}^\perp$. It is sufficient to prove that for
$s\in \mathcal{G}(U)$, $t\in \mathcal{F}(U)$,
\[\widetilde{\varphi}^U(S^U|_{\mathcal{G}(U)}(s),
\dot t)= \widetilde{\varphi}^U(s, (T^\ast)^U(\dot t));\] this
statement is equivalent to showing that \[\varphi^U(S^U(s), t)=
\varphi^U(s, T^U(t)),\]which is exactly the condition that $S$ and
$T$ are transposes of each other.
\end{proof}

The last part of this section concerns with the
$\mathcal{A}$-isomorphism of the $\mathcal{A}$-annihilator of a
sub-$\mathcal{A}$-module $\mathcal{F}$ of an $\mathcal{A}$-module
$\mathcal{E}$ and the dual $(\mathcal{E}/\mathcal{F})^\ast$ of the
quotient $\mathcal{A}$-module $\mathcal{E}/\mathcal{F}$. This
question requires some preparation.

\begin{mydf}
\emph{Let $\mathcal{E}$ and $\mathcal{F}$ be $\mathcal{A}$-modules on a
topological space $X$, $U$ an open subset of $X$, and $\varphi\in
\mathcal{H}om_\mathcal{A}(\mathcal{E}, \mathcal{F})(U)=
\mbox{Hom}_{\mathcal{A}|_U}(\mathcal{E}|_U, \mathcal{F}|_U)$. For
any $\mathcal{A}$-module $\mathcal{G}$ on $X$, we define an
$\mathcal{A}(U)$-morphism
\[\xymatrix{\mathcal{H}om_\mathcal{A}(\mathcal{G}, \mathcal{E})(U)\ar[r]^{\varphi_\ast} &
\mathcal{H}om_\mathcal{A}(\mathcal{G}, \mathcal{F})(U)}\]by setting
\[\varphi_\ast(f)= \varphi\circ f\equiv (\varphi_V\circ
f_V)_{U\supseteq V, open}\equiv ((\varphi_V)_\ast
(f_V))_{U\supseteq V, open}
\]for all $f\in \mathcal{H}om_\mathcal{A}(\mathcal{G},
\mathcal{E})(U)$. Likewise, we can define an
$\mathcal{A}(U)$-morphism
\[\xymatrix{\mathcal{H}om_\mathcal{A}(\mathcal{E},
\mathcal{G})(U)\ar[r]^{\varphi^\ast} &
\mathcal{H}om_\mathcal{A}(\mathcal{F}, \mathcal{G})(U)}\]by the
assignment \[\varphi^\ast(f)= f\circ \varphi\equiv (f_V\circ
\varphi_V)_{U\supseteq V, open}\equiv
((\varphi_V)^\ast(f_V))_{U\supseteq V, open}\]for all $f\in
\mathcal{H}om_\mathcal{A}(\mathcal{E}, \mathcal{G})(U)$.}
\end{mydf}

\begin{prop}
Let $\mathcal{E}$, $\mathcal{F}$, and $\mathcal{G}$ be
$\mathcal{A}$-modules on $X$, $\varphi\in
\mathcal{H}om_\mathcal{A}(\mathcal{E}, \mathcal{F})(U)$ and
$\psi\in \mathcal{H}om_\mathcal{A}(\mathcal{F}, \mathcal{G})(U)$,
where $U$ is an open subset of $X$. Then, we have
\begin{enumerate} \item [{$(1)$}] $(\psi\circ \varphi)_\ast=
\psi_\ast\circ \varphi_\ast$ \item [{$(2)$}] $(\psi\circ
\varphi)^\ast= \varphi^\ast\circ \psi^\ast.$
\end{enumerate}
\end{prop}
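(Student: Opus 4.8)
The plan is to prove both functoriality identities $(1)$ and $(2)$ by reducing each to the corresponding identity for the component morphisms over every open $V\subseteq U$, and there to a purely formal statement about composition of maps. Since $\varphi_\ast$, $\psi_\ast$, $\varphi^\ast$, $\psi^\ast$ are all defined componentwise on the complete presheaves of sections (as in the preceding definition, where $\varphi_\ast(f)=(\varphi_V\circ f_V)_V$ and $\varphi^\ast(f)=(f_V\circ\varphi_V)_V$), it suffices to verify the two equalities after evaluating on an arbitrary section and restricting to each open $V$. I would phrase the argument so that the indexing over $V$ is carried along but never genuinely interferes, since the restriction maps are compatible with all the compositions in sight.

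First I would establish $(1)$. Fix an open $V\subseteq U$ and take an arbitrary $f\in\mathcal{H}om_\mathcal{A}(\mathcal{G},\mathcal{E})(U)$, so that $f\equiv(f_V)_V$. Applying the definition, $(\psi\circ\varphi)_\ast(f)=(\psi\circ\varphi)\circ f$, whose component over $V$ is $(\psi_V\circ\varphi_V)\circ f_V$. On the other side, $(\psi_\ast\circ\varphi_\ast)(f)=\psi_\ast(\varphi\circ f)$, whose component over $V$ is $\psi_V\circ(\varphi_V\circ f_V)$. These two components agree by associativity of composition of the underlying maps $f_V$, $\varphi_V$, $\psi_V$ on the sections over $V$. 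Since this holds for every open $V\subseteq U$ and every section $f$, the two $\mathcal{A}(U)$-morphisms coincide, which is $(1)$.

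For $(2)$ I would run the symmetric computation, being careful with the order reversal that is characteristic of contravariance. Take $f\in\mathcal{H}om_\mathcal{A}(\mathcal{G},\mathcal{G})(U)$ in the appropriate Hom-sheaf and fix an open $V\subseteq U$. By definition, $(\psi\circ\varphi)^\ast(f)=f\circ(\psi\circ\varphi)$, with component $f_V\circ(\psi_V\circ\varphi_V)$ over $V$. Meanwhile $(\varphi^\ast\circ\psi^\ast)(f)=\varphi^\ast(f\circ\psi)$, whose component over $V$ is $(f_V\circ\psi_V)\circ\varphi_V$. Again associativity of composition gives equality of the two components, and the conclusion $(2)$ follows by ranging over all $V$ and all $f$.

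I do not expect any genuine obstacle here: the statement is the sheaf-theoretic incarnation of the elementary fact that the pushforward (covariant) Hom-functor preserves composition order while the pullback (contravariant) Hom-functor reverses it, and both reductions bottom out in associativity of ordinary function composition on local sections. The only point requiring a modicum of care is bookkeeping: one must verify that the componentwise definitions are compatible with the presheaf restriction maps so that the displayed equalities of components over each $V$ genuinely assemble into equalities of $\mathcal{A}(U)$-morphisms of the respective $\mathcal{H}om$-sheaves. Since the defining formulas for $\varphi_\ast$ and $\varphi^\ast$ are manifestly natural in $V$ (composition commutes with restriction), this compatibility is automatic, and so the proof is essentially a formal verification.
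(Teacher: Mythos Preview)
Your proposal is correct and is precisely the unpacking of what the paper records as ``Immediate'': the identities reduce, component by component over each open $V\subseteq U$, to associativity of composition of section-level maps. Aside from a harmless slip in part $(2)$ where you write $f\in\mathcal{H}om_\mathcal{A}(\mathcal{G},\mathcal{G})(U)$ (the domain should be the appropriate Hom with source $\mathcal{G}$ and an auxiliary target module), the argument is exactly the intended one.
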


\begin{proof}
Immediate.
\end{proof}

Our main interest in the above induced $\mathcal{A}(U)$-morphisms
$\varphi_\ast$ and $\varphi^\ast$ transpires in the following
theorem.

\begin{theo}
Consider a short exact sequence
\[\xymatrix{0\ar[r] & \mathcal{E}'\ar[r]^\varphi &
\mathcal{E}\ar[r]^\psi & \mathcal{E}''\ar[r] & 0}\]of
$\mathcal{A}$-modules $($on $X)$ and $\mathcal{A}$-morphisms. For
an arbitrary $\mathcal{A}$-module $\mathcal{F}$, the induced
sequences of $\mathcal{A}(X)$-modules and
$\mathcal{A}(X)$-morphisms
\begin{enumerate}
\item [{$(1)$}] $\xymatrix{0\ar[r] &
\mathcal{H}om_\mathcal{A}(\mathcal{F},
\mathcal{E}')(X)\ar[r]^{\varphi_\ast} &
\mathcal{H}om_\mathcal{A}(\mathcal{F},
\mathcal{E})(X)\ar[r]^{\psi_\ast} &
\mathcal{H}om_\mathcal{A}(\mathcal{F}, \mathcal{E}'')(X)}$
\item[{$(2)$}] $\xymatrix{0\ar[r] &
\mathcal{H}om_\mathcal{A}(\mathcal{E}'',
\mathcal{F})(X)\ar[r]^{\psi^\ast} &
\mathcal{H}om_\mathcal{A}(\mathcal{E},
\mathcal{F})(X)\ar[r]^{\varphi^\ast} &
\mathcal{H}om_\mathcal{A}(\mathcal{E}', \mathcal{F})(X)}$
\end{enumerate} are exact. The diagrams above are
$\mathcal{A}$-isomorphic to the diagrams
\begin{enumerate}
\item [{$(1')$}] $\xymatrix{0\ar[r] &
\mbox{Hom}_\mathcal{A}(\mathcal{F},
\mathcal{E}')\ar[r]^{\varphi_\ast} &
\mbox{Hom}_\mathcal{A}(\mathcal{F}, \mathcal{E})\ar[r]^{\psi_\ast}
& \mbox{Hom}_\mathcal{A}(\mathcal{F}, \mathcal{E}'')}$
\item[{$(2')$}] $\xymatrix{0\ar[r] &
\mbox{Hom}_\mathcal{A}(\mathcal{E}'',
\mathcal{F})\ar[r]^{\psi^\ast} &
\mbox{Hom}_\mathcal{A}(\mathcal{E},
\mathcal{F})\ar[r]^{\varphi^\ast} &
\mbox{Hom}_\mathcal{A}(\mathcal{E}', \mathcal{F})}$
\end{enumerate}
\label{30jtheo}\end{theo}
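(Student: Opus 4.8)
The plan is to read the statement as the sheaf-theoretic \emph{left-exactness} of the two Hom functors, the covariant $\mathcal{H}om_\mathcal{A}(\mathcal{F}, -)$ giving $(1)$ and the contravariant $\mathcal{H}om_\mathcal{A}(-, \mathcal{F})$ giving $(2)$, evaluated on global sections over $X$. Two background facts would be used throughout: that a sequence of $\mathcal{A}$-modules is exact precisely when it is exact on every stalk, and that $\mathcal{H}om_\mathcal{A}(\mathcal{G}, \mathcal{K})(X)$ is, by the very definition of the $\mathcal{H}om$-sheaf as the sheaf generated by $U \longmapsto \mbox{Hom}_{\mathcal{A}|_U}(\mathcal{G}|_U, \mathcal{K}|_U)$, nothing but the $\mathcal{A}(X)$-module $\mbox{Hom}_\mathcal{A}(\mathcal{G}, \mathcal{K})$ of global $\mathcal{A}$-morphisms. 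I would then treat the two sequences separately, establishing exactness at each of the two relevant spots, and finish by transporting the result to the diagrams $(1')$ and $(2')$.

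For sequence $(1)$, I first show $\varphi_\ast$ is injective: if $f$ satisfies $\varphi_\ast f = \varphi \circ f = 0$, then, since $\varphi$ is a monomorphism of $\mathcal{A}$-modules (injective on every stalk), $f$ must vanish. The inclusion $\mbox{im}\,\varphi_\ast \subseteq \ker \psi_\ast$ is immediate from $\psi_\ast \circ \varphi_\ast = (\psi \circ \varphi)_\ast = 0$, combining the functoriality of the preceding proposition with $\psi \circ \varphi = 0$ (which is exactness of the original sequence at $\mathcal{E}$). The substantive step is $\ker \psi_\ast \subseteq \mbox{im}\,\varphi_\ast$: given $g$ with $\psi \circ g = 0$, the image of $g$ lands in $\ker \psi = \mbox{im}\,\varphi$, and since $\varphi$ is an $\mathcal{A}$-isomorphism onto the subsheaf $\mbox{im}\,\varphi$, I set $f := \varphi^{-1} \circ g$ and verify stalkwise that $f$ is a genuine $\mathcal{A}$-morphism $\mathcal{F} \to \mathcal{E}'$ with $\varphi_\ast f = g$.

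Sequence $(2)$ is handled by the dual argument. Injectivity of $\psi^\ast$ follows because $\psi$ is an epimorphism (surjective on stalks): if $f \circ \psi = 0$, then $f$ vanishes on $\mbox{im}\,\psi = \mathcal{E}''$, so $f = 0$. Again $\mbox{im}\,\psi^\ast \subseteq \ker \varphi^\ast$ is automatic from $\varphi^\ast \circ \psi^\ast = (\psi \circ \varphi)^\ast = 0$. For $\ker \varphi^\ast \subseteq \mbox{im}\,\psi^\ast$, a morphism $g$ with $g \circ \varphi = 0$ vanishes on $\mbox{im}\,\varphi = \ker \psi$, hence factors through the quotient $\mathcal{E}/\ker \psi$; identifying the latter with $\mathcal{E}''$ via the $\mathcal{A}$-isomorphism induced by $\psi$, I obtain $h \colon \mathcal{E}'' \to \mathcal{F}$ with $h \circ \psi = g$, that is $\psi^\ast h = g$.

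Finally, the claimed $\mathcal{A}$-isomorphism of the diagrams $(1)$, $(2)$ with $(1')$, $(2')$ is essentially the identification recalled above, namely $\mathcal{H}om_\mathcal{A}(\mathcal{G}, \mathcal{K})(X) = \mbox{Hom}_\mathcal{A}(\mathcal{G}, \mathcal{K})$ applied node by node, together with the fact that $\varphi_\ast, \psi_\ast, \varphi^\ast, \psi^\ast$ are defined compatibly at the level of sections and at the sheaf-$\mathcal{H}om$ level. I expect the main obstacle to be purely technical: checking that the factoring maps $\varphi^{-1} \circ g$ and the induced $h$ are honest continuous $\mathcal{A}$-morphisms and not merely sectionwise assignments. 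This is resolved by carrying out the factorizations \emph{stalkwise} --- where $\mbox{im}\,\varphi$, $\ker \psi$, and $\mathcal{E}/\ker \psi$ are all $\mathcal{A}$-modules by the standard results of Mallios --- and then invoking that a morphism of the defining presheaves assembles into an $\mathcal{A}$-morphism of the generated sheaves.
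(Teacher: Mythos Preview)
Your proposal is correct and follows essentially the same three-step pattern as the paper's proof: establish injectivity of $\varphi_\ast$ from injectivity of $\varphi$, obtain $\mbox{im}\,\varphi_\ast \subseteq \ker\psi_\ast$ from functoriality and $\psi\circ\varphi=0$, and prove the reverse inclusion by factoring a given $f\in\ker\psi_\ast$ through $\varphi$. The only minor variation is that the paper performs the factoring step \emph{sectionwise}---defining $f'_U(s)\in\mathcal{E}'(U)$ for each open $U$ and each $s\in\mathcal{F}(U)$ via $\ker\psi_U=\mbox{im}\,\varphi_U$---rather than stalkwise as you propose, and the paper omits the argument for sequence $(2)$ entirely, declaring it analogous.
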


\begin{proof}
We shall show that $(1)$ is exact. The sequence $(2)$ is
established in a similar way.

First, let $f\in \ker\varphi_\ast$. We have $0= \varphi_\ast(f)=
\varphi\circ f\in \mathcal{H}om_\mathcal{A}(\mathcal{F},
\mathcal{E})(X)$, whence $f= (0_U)_{X\supseteq U, open}$ with
$0_U: \mathcal{F}(U)\longrightarrow \mathcal{E}(U)$, $0_U(s)= 0$
for all $s\in \mathcal{F}(U)$ and all open subset $U\subseteq X$,
which means that $\varphi_\ast$ is one-to-one.

Next, let us show that $\mbox{im}\varphi_\ast$ is an
$\mathcal{A}(X)$-submodule of the $\mathcal{A}(X)$-module
$\ker\psi_\ast$. (See Mallios[\cite{mallios}, pp 108, 109] for a
proof of the statement: Given $\mathcal{A}$-modules $\mathcal{E}$,
$\mathcal{F}$, and an $\mathcal{A}$-morphism $\phi:
\mathcal{E}\longrightarrow \mathcal{F}$. Then, $\ker\phi:= \{z\in
\mathcal{E}:\ \phi(z)=0\}$ and $\mbox{im}\phi:=
\phi(\mathcal{E})\subseteq \mathcal{F}$ are $\mathcal{A}$-modules;
consequently $\ker\phi(X)$ and $\mbox{im}\phi(X)$ are
$\mathcal{A}(X)$-modules.) If $f\in \mbox{im}\varphi_\ast$, there
exists $f'\in \mathcal{H}om_\mathcal{A}(\mathcal{F},
\mathcal{E}')(X)$ such that $f= \varphi_\ast(f')= \varphi\circ
f'$. Consequently
\[\psi_\ast(f)= \psi_\ast(\varphi_\ast(f'))= (\psi\circ
\varphi)_\ast(f')= 0\]because $\psi\circ \varphi= 0$. Thus, $f\in
\ker\psi_\ast$, and we have established that
\[\mbox{im}\varphi_\ast\subseteq \ker\psi_\ast.\]

Finally, let us show that $\ker\psi_\ast$ is an
$\mathcal{A}(X)$-submodule of $\mbox{im}\varphi_\ast$. To this end,
let $f\in \ker\psi_\ast$; then for every $s\in \mathcal{F}(U)$ where
$U$ is an open subset of $X$,
\[\psi_U(f_U(s))= (\psi_U\circ f_U)(s)= [(\psi_U)_\ast(f_U)](s)=
0\in \mathcal{E}''(U)\]and so $f_U(s)\in \ker\psi_U=
\mbox{im}\varphi_U$. Thus, there exists $s'\in \mathcal{E}'(U)$ such
that $f_U(s)= \varphi_U(s')$; and since $\varphi$ is one-to-one,
such an element $s'$ is unique. We can therefore define a mapping
$f'_U: \mathcal{F}(U)\longrightarrow \mathcal{E}'(U)$ by setting
$f'_U(s)= s'$. Clearly, $f_U'$ yields an $\mathcal{A}(U)$-morphism
of $\mathcal{A}(U)$-modules $\mathcal{F}(U)$ and $\mathcal{E}'(U)$,
which by abuse of language we also call $f'_U$. But \[f_U= \varphi_U
\circ f'_U= (\varphi_U)_\ast(f_U')\in
\mbox{im}(\varphi_U)_\ast,\]where $U$ is an arbitrary subset of $X$.
Thus, $\ker(\psi_U)_\ast\subseteq \mbox{im} (\varphi_U)_\ast$, for
every open $U\subseteq X$. Hence, $\ker\psi_\ast\subseteq
\mbox{im$\varphi_\ast$}$, which ends the proof.
\end{proof}

For the notion in the following definition, we refer to
Mallios[\cite{mallios}, pp. 301, 302] for specific details.

\begin{mydf}
\emph{Let $\mathcal{E}$ and $\mathcal{F}$ be $\mathcal{A}$-modules
on a topological space $X$. By the \textbf{transpose} of an
$\mathcal{A}$-morphism $\varphi: \mathcal{E}\longrightarrow
\mathcal{F}$, we mean the $\mathcal{A}$-morphism
\[{}^t\varphi\equiv ({}^t\varphi_U)_{X\supseteq U, open}:
\mathcal{F}^\ast\longrightarrow \mathcal{E}^\ast,\]given by the
assignment \[{}^t\varphi_U(u):= (u_V\circ \varphi_V)_{U\supseteq V,
open}\]for every $u\in \mathcal{F}^\ast(U)$, with $U$ open in
$X$.}\hfill$\square$
\end{mydf}

The principal properties of \textit{transposition} in classical
module theory apply in the setting as well, and are easily
verified.

\begin{prop}
Let $\mathcal{E}$, $\mathcal{F}$, $\mathcal{G}$ be
$\mathcal{A}$-modules on a topological space $X$. Then
\begin{enumerate}
\item [{$(1)$}] ${}^t(\mbox{id}_\mathcal{E})=
\mbox{id}_{\mathcal{E}^\ast}.$ \item [{$(2)$}] If $\varphi,
\psi\in \mbox{Hom}_\mathcal{A}(\mathcal{E}, \mathcal{F})$, then
${}^t(\varphi+ \psi)= {}^t\varphi+ {}^t\psi.$ \item[{$(3)$}] If
$\varphi\in \mbox{Hom}_\mathcal{A}(\mathcal{E}, \mathcal{F})$ and
$\psi\in \mbox{Hom}_\mathcal{A}(\mathcal{F}, \mathcal{G})$, then
${}^t(\psi\circ \varphi)= {}^t\varphi\circ {}^t\psi.$
\end{enumerate}
\label{classical}\end{prop}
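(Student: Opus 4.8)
The plan is to verify each of the three transposition identities in Proposition \ref{classical} by reducing them to the corresponding statements at the level of local sections, exploiting the fact that an $\mathcal{A}$-morphism between $\mathcal{A}$-modules is completely determined by its families of section-morphisms indexed by the open sets of $X$. Recall from the definition of transpose that for $\varphi: \mathcal{E}\longrightarrow \mathcal{F}$ and $u\in\mathcal{F}^\ast(U)$ we have ${}^t\varphi_U(u)=(u_V\circ\varphi_V)_{U\supseteq V,\,open}$. Since two $\mathcal{A}$-morphisms coincide iff their section-level components agree on every open $U\subseteq X$ and on every test argument, each identity will follow once I check the indicated equality of components, and each such check is the classical (module-theoretic) computation applied inside each $\mathcal{A}(U)$, cf.\ Proposition \ref{classical}'s analog in ordinary module theory.

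First I would treat part $(1)$: for $\mathrm{id}_\mathcal{E}$ one has $({}^t\mathrm{id}_\mathcal{E})_U(u)=(u_V\circ(\mathrm{id}_\mathcal{E})_V)_{U\supseteq V}=(u_V)_{U\supseteq V}=u$ for every $u\in\mathcal{E}^\ast(U)$, since $(\mathrm{id}_\mathcal{E})_V=\mathrm{id}_{\mathcal{E}(V)}$; hence ${}^t\mathrm{id}_\mathcal{E}=\mathrm{id}_{\mathcal{E}^\ast}$. For part $(2)$, additivity, I would fix an open $U$ and $u\in\mathcal{F}^\ast(U)$ and compute ${}^t(\varphi+\psi)_U(u)=(u_V\circ(\varphi+\psi)_V)_{V}=(u_V\circ(\varphi_V+\psi_V))_{V}=(u_V\circ\varphi_V+u_V\circ\psi_V)_{V}$, which is exactly $({}^t\varphi_U+{}^t\psi_U)(u)$; the only thing being used is that composition with the fixed linear functional $u_V$ distributes over the pointwise sum $\varphi_V+\psi_V$ of $\mathcal{A}(V)$-morphisms, a purely module-theoretic fact valid in each fiber presheaf.

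For part $(3)$, the contravariance of transposition, I would take $\varphi\in\mathrm{Hom}_\mathcal{A}(\mathcal{E},\mathcal{F})$, $\psi\in\mathrm{Hom}_\mathcal{A}(\mathcal{F},\mathcal{G})$, fix open $U$ and $u\in\mathcal{G}^\ast(U)$, and unwind both sides. On one hand ${}^t(\psi\circ\varphi)_U(u)=(u_V\circ(\psi\circ\varphi)_V)_{V}=(u_V\circ\psi_V\circ\varphi_V)_{V}$. On the other hand, writing $w:={}^t\psi_U(u)=(u_V\circ\psi_V)_{V}\in\mathcal{F}^\ast(U)$, I have $({}^t\varphi_U\circ{}^t\psi_U)(u)={}^t\varphi_U(w)=(w_V\circ\varphi_V)_{V}=(u_V\circ\psi_V\circ\varphi_V)_{V}$, so the two families coincide componentwise and the identity follows. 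The one bookkeeping point deserving care is that the components $w_V$ of the transported functional $w={}^t\psi_U(u)$ are precisely $u_V\circ\psi_V$ for each open $V\subseteq U$, i.e.\ that the definition of transpose is compatible with the restriction structure of the dual presheaves; this compatibility is exactly what the indexing convention $({}^t\psi_U(u))_V=u_V\circ\psi_V$ encodes, and it guarantees that ${}^t\psi_U(u)$ really lies in $\mathcal{F}^\ast(U)$ so that ${}^t\varphi_U$ may be applied to it.

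The main obstacle, such as it is, is not any deep computation but rather the careful tracking of the double indexing: each arrow ${}^t\varphi$ is itself a family $({}^t\varphi_U)_U$, and each value ${}^t\varphi_U(u)$ is in turn a family indexed by open $V\subseteq U$, so I must confirm at every stage that the families are well defined as elements of the appropriate $\mathcal{H}om$-sheaf sections and that composition respects restriction. Once this indexing is handled uniformly, all three identities reduce to the classical transposition laws in each $\mathcal{A}(U)$, and the completeness of the dual presheaves (already invoked in Lemma \ref{lsub}) ensures that componentwise agreement upgrades to equality of the associated sheaf $\mathcal{A}$-morphisms.
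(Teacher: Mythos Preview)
Your proposal is correct and is precisely the routine section-wise verification that the paper has in mind: the paper does not actually write out a proof of Proposition~\ref{classical}, merely remarking beforehand that these properties ``are easily verified,'' and your componentwise unwinding of the definition ${}^t\varphi_U(u)=(u_V\circ\varphi_V)_{V\subseteq U}$ is exactly how one carries this out. There is nothing missing or different in approach.
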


\begin{cor}
If $\varphi: \mathcal{E}\longrightarrow \mathcal{F}$ is an
$\mathcal{A}$-isomorphism of the $\mathcal{A}$-modules
$\mathcal{E}$ and $\mathcal{F}$, then so is ${}^t\varphi:
\mathcal{F}^\ast\longrightarrow \mathcal{E}^\ast$; and we also
have in this case that $({}^t\varphi)^{-1}= {}^t(\varphi^{-1})$.
\end{cor}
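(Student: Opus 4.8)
The plan is to prove the corollary directly from Proposition~\ref{classical}, which records functoriality of transposition. I would first establish that $^t\varphi$ is an $\mathcal{A}$-morphism (this is guaranteed by the preceding definition) and then exhibit an explicit two-sided inverse for it, namely $^t(\varphi^{-1})$. Since $\varphi$ is an $\mathcal{A}$-isomorphism, there is an $\mathcal{A}$-morphism $\varphi^{-1}:\mathcal{F}\longrightarrow\mathcal{E}$ with $\varphi^{-1}\circ\varphi=\mbox{id}_\mathcal{E}$ and $\varphi\circ\varphi^{-1}=\mbox{id}_\mathcal{F}$, so the whole argument reduces to applying the contravariant functoriality of $^t(-)$ to these two identities.

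Concretely, I would compute as follows. Applying part $(3)$ of Proposition~\ref{classical} to the composite $\varphi^{-1}\circ\varphi=\mbox{id}_\mathcal{E}$ gives
\[
{}^t\varphi\circ {}^t(\varphi^{-1})= {}^t(\varphi^{-1}\circ\varphi)= {}^t(\mbox{id}_\mathcal{E})=\mbox{id}_{\mathcal{E}^\ast},
\]
where the final equality is part $(1)$ of the same proposition. Symmetrically, applying $(3)$ to $\varphi\circ\varphi^{-1}=\mbox{id}_\mathcal{F}$ yields
\[
{}^t(\varphi^{-1})\circ {}^t\varphi= {}^t(\varphi\circ\varphi^{-1})= {}^t(\mbox{id}_\mathcal{F})=\mbox{id}_{\mathcal{F}^\ast}.
\]
These two relations exhibit $^t(\varphi^{-1})$ as a two-sided inverse of $^t\varphi$, so $^t\varphi$ is an $\mathcal{A}$-isomorphism and moreover $({}^t\varphi)^{-1}={}^t(\varphi^{-1})$, which is precisely the second assertion of the corollary.

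There is essentially no obstacle here beyond bookkeeping: the corollary is a purely formal consequence of the fact that $^t(-)$ is a contravariant functor, as encoded by Proposition~\ref{classical}. The only point that warrants a line of care is the orientation of the arrows, that is, that reversing a composite under $^t(-)$ matches the reversal of inverses so that the two composites land in $\mbox{End}(\mathcal{E}^\ast)$ and $\mbox{End}(\mathcal{F}^\ast)$ respectively; once one writes $(3)$ with the correct order this is automatic. I would therefore keep the proof to the two displayed computations above together with a sentence identifying $^t(\varphi^{-1})$ as the inverse, and conclude.
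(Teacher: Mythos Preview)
Your proof is correct and follows essentially the same approach as the paper's: both apply parts $(1)$ and $(3)$ of Proposition~\ref{classical} to the identities $\varphi^{-1}\circ\varphi=\mbox{id}_\mathcal{E}$ and $\varphi\circ\varphi^{-1}=\mbox{id}_\mathcal{F}$ to exhibit ${}^t(\varphi^{-1})$ as a two-sided inverse of ${}^t\varphi$. The displayed computations are identical in substance to those in the paper.
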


\begin{proof}
By hypothesis and items $(1)$, $(3)$ of Proposition
\ref{classical}, we have \[{}^t\varphi\circ {}^t\varphi^{-1}=
{}^t(\varphi^{-1}\circ \varphi)= {}^t(\mbox{id}_\mathcal{E})=
\mbox{id}_{\mathcal{E}^\ast}\]and \[{}^t\varphi^{-1}\circ
{}^t\varphi= {}^t(\varphi\circ \varphi^{-1})=
{}^t(\mbox{id}_\mathcal{F})= \mbox{id}_{\mathcal{F}^\ast}.\]The
proof is finished.
\end{proof}

\begin{cor}\label{cor04}
If $\mathcal{F}$ is a sub-$\mathcal{A}$-module of $\mathcal{E}$,
then \[(\mathcal{E}/\mathcal{F})^\ast= \mathcal{F}^\perp,\]within
an $\mathcal{A}$-isomorphism.
\end{cor}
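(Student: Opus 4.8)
The plan is to realize the annihilator $\mathcal{F}^\perp$ as the dual of the quotient by applying the contravariant exactness result of Theorem \ref{30jtheo}(2) to the canonical short exact sequence associated with the sub-$\mathcal{A}$-module $\mathcal{F}$. First I would write down the natural short exact sequence of $\mathcal{A}$-modules
\[
\xymatrix{0\ar[r] & \mathcal{F}\ar[r]^{i} & \mathcal{E}\ar[r]^{q} & \mathcal{E}/\mathcal{F}\ar[r] & 0,}
\]
where $i$ is the inclusion $\mathcal{A}$-morphism (available since $\mathcal{F}$ is a sub-$\mathcal{A}$-module of $\mathcal{E}$) and $q$ is the canonical quotient $\mathcal{A}$-morphism. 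I would then dualize by taking $\mathcal{H}om_\mathcal{A}(-,\mathcal{A})$, i.e. apply Theorem \ref{30jtheo}(2) with $\mathcal{F}$ there replaced by the structure sheaf $\mathcal{A}$, and with $(\mathcal{E}',\mathcal{E},\mathcal{E}'')=(\mathcal{F},\mathcal{E},\mathcal{E}/\mathcal{F})$ and $(\varphi,\psi)=(i,q)$. By that theorem the induced sequence
\[
\xymatrix{0\ar[r] & (\mathcal{E}/\mathcal{F})^\ast\ar[r]^{\,{}^t q} & \mathcal{E}^\ast\ar[r]^{\,{}^t i} & \mathcal{F}^\ast}
\]
is exact, where I use the transpose notation (the maps $q^\ast={}^tq$ and $i^\ast={}^ti$) consistent with the definition of transpose given above.

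From exactness the key extraction is twofold. Exactness at the left term says ${}^tq$ is one-to-one, so $(\mathcal{E}/\mathcal{F})^\ast$ is $\mathcal{A}$-isomorphic onto its image $\mathrm{im}\,{}^tq$ inside $\mathcal{E}^\ast$. Exactness at the middle term says $\mathrm{im}\,{}^tq=\ker{}^ti$. The heart of the argument is therefore to identify $\ker{}^ti$ with the annihilator $\mathcal{F}^\perp$. I would verify this on local sections: for an open $U\subseteq X$ and $u\in\mathcal{E}^\ast(U)$, the transpose ${}^ti_U(u)$ is the section $V\mapsto u_V\circ i_V$, which vanishes precisely when $u_V$ kills every section of $\mathcal{F}$ over every open $V\subseteq U$; by the definition of the annihilator presheaf $U\mapsto\mathcal{F}(U)^\perp$ (Definition \ref{sub}) together with the identification $\mathcal{F}^\perp(U)=\mathcal{F}(U)^\perp$ from relation (\ref{anni}), this is exactly the condition $u\in\mathcal{F}^\perp(U)$. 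Hence $\ker{}^ti=\mathcal{F}^\perp$ within an $\mathcal{A}$-isomorphism.

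Combining the two identifications gives $(\mathcal{E}/\mathcal{F})^\ast\cong\mathrm{im}\,{}^tq=\ker{}^ti=\mathcal{F}^\perp$, which is the desired $\mathcal{A}$-isomorphism. I expect the main obstacle to be bookkeeping at the level of presheaves versus generated sheaves: Theorem \ref{30jtheo} is phrased with global sections over $X$ and with the functorial $\mathcal{H}om$ sheaves, so I would run the kernel/image identification locally over each open $U$ (using that all the relevant presheaves are complete, by Lemma \ref{lsub} and the completeness of the dual presheaf) and then pass to the generated sheaves via Mallios' Proposition 11.1, exactly as was done to obtain (\ref{anni}). The remaining steps — that $i$ and $q$ assemble into an honest short exact sequence of $\mathcal{A}$-modules and that the transpose is compatible with restriction — are routine and follow from the definitions of sub-$\mathcal{A}$-module, quotient $\mathcal{A}$-module, and transpose recalled above.
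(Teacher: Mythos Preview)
Your proposal is correct and follows essentially the same approach as the paper: dualize the canonical short exact sequence $0\to\mathcal{F}\xrightarrow{\iota}\mathcal{E}\xrightarrow{q}\mathcal{E}/\mathcal{F}\to0$ via Theorem~\ref{30jtheo}(2), then read off $(\mathcal{E}/\mathcal{F})^\ast\cong\mathrm{im}\,{}^tq=\ker{}^t\iota=\mathcal{F}^\perp$. The only cosmetic difference is that the paper invokes the already-established identity $\ker({}^t\iota)=(\mathrm{im}\,\iota)^\perp$ (the theorem immediately following Definition~\ref{def04}) in one line, whereas you re-verify this identification directly on local sections using Definition~\ref{sub} and relation~(\ref{anni}).
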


\begin{proof}
Let $q: \mathcal{E}\longrightarrow \mathcal{E}/\mathcal{F}$ be the
quotient $\mathcal{A}$-morphism and \[\xymatrix{0\ar[r] &
\mathcal{F} \ar[r]^\iota & \mathcal{E}\ar[r]^q &
\mathcal{E}/\mathcal{F}\ar[r] & 0}\]the natural short exact
sequence (see Mallios[\cite{mallios}, Lemma 2.1, p.116]). By
Theorem \ref{30jtheo}, we have the induced short exact sequence
\[\xymatrix{\mathcal{F}^\ast & \mathcal{E}^\ast\ar[l]_{{}^t\iota} &
(\mathcal{E}/\mathcal{F})^\ast\ar[l]_{{}^tq} & 0\ar[l]}.\]Then,
since
\[\ker ({}^t\iota)= (\mbox{im}\iota)^\perp,\]it follows from the
exactness of the foregoing sequence that
\[(\mathcal{E}/\mathcal{F})^\ast= \mbox{im}({}^tq)= \ker ({}^t\iota)=
(\mbox{im}\iota)^\perp= \mathcal{F}^\perp.\]
\end{proof}

\section{Properties of exterior $\mathcal{A}$-$2$-forms}

In this section, we examine some properties of exterior
$\mathcal{A}$-$2$-forms. The most useful property is the
\textit{normal} (or \textit{Darboux}) \textit{form} for exterior
$\mathcal{A}$-$2$-forms, see [\cite{malliosntumba}, Theorem 3.3],
which we prove this time by following Libermann and
Marle[\cite{libermann}, Theorem 2.3, pp. 4,5] and
Sternberg[\cite{sternberg}, Theorem 5.1, p. 24].

Throughout this section, $\mathcal{E}$ stands for an
\textit{$\mathcal{A}$-module} on a topological space $X\equiv (X,
\tau)$, and $\omega:\mathcal{E}\oplus \mathcal{E}\longrightarrow
\mathcal{A}$ for an \textit{exterior bilinear $\mathcal{A}$-form},
unless otherwise specified.

\begin{mydf}
\emph{Let $\eta: \mathcal{E}\oplus \ldots\oplus
\mathcal{E}\longrightarrow \mathcal{A}$ be a non-zero exterior
$\mathcal{A}$-$k$-form. For every $s\equiv (s_U)_{U\in
\mathcal{T}}\in \prod_{U\in \mathcal{T}}\mathcal{E}(U)$, let
\[i: Hom_\mathcal{A}(\mathcal{E},
Hom_\mathcal{A}(\bigwedge{}^k\mathcal{E}^\ast,
\bigwedge{}^{(k-1)}\mathcal{E}^\ast))\]be an $\mathcal{A}$-morphism,
whose $U$-component, for an arbitrary open subset $U$ of $X$, is the
$\mathcal{A}(U)$-morphism \begin{eqnarray*}\lefteqn{i_U\in
Hom_{\mathcal{A}(U)}(\mathcal{E}(U),
Hom_{\mathcal{A}(U)}((\bigwedge{}^k\mathcal{E}^\ast)(U),
(\bigwedge{}^{(k-1)}\mathcal{E}^\ast)(U)))}\\ & & \equiv
Hom_{\mathcal{A}(U)}(\mathcal{E}(U),
Hom_{\mathcal{A}(U)}((\bigwedge{}^k\mathcal{E})^\ast(U),
(\bigwedge{}^{(k-1)}\mathcal{E}^)\ast(U))),\end{eqnarray*}which is
given by
\[i(s_U)\eta_U({s_1}_U,\ldots, {s_{k-1}}_U)= \eta_U(s_U,{s_1}_U,\ldots,
{s_{k-1}}_U)\]for all ${s_1}_U,\ldots, {s_{k-1}}_U\in
\mathcal{E}(U)$. We call \[i(s)\eta\equiv
(i_U(s_U)\eta_U)_{U\in\tau}: \mathcal{E}\oplus\ldots \oplus
\mathcal{E}\longrightarrow \mathcal{A}\] the \textbf{inner
$\mathcal{A}$-product} of $\eta$ and $s$.}\hfill$\square$
\end{mydf}

Now, let us move our attention to $\mathcal{A}$-$2$-forms. Suppose
$\omega: \mathcal{E}\oplus \mathcal{E}\longrightarrow \mathcal{A}$
is an $\mathcal{A}$-$2$-form on a free $\mathcal{A}$-module
$\mathcal{E}$; for a family $s\equiv (s_U)_{U\in \tau}\in
\prod_{U\in \tau}\mathcal{E}(U)$, the following mapping in
$\mbox{Hom}_\mathcal{A}(\mathcal{E},
\mathcal{E}^{\ast\ast}=\mathcal{E}= \mathcal{E}^\ast)$, see
Mallios[\cite{mallios}, relation 5.4, p. 298], given by
\[s\longmapsto -i(s)\omega\equiv -(i(s_U)\omega_U)_{U\in
\tau},\] will be denoted, keeping with Libermann and
Marle[\cite{libermann}, p. 3], by $\omega^\flat$. Next, for every
open $U\subseteq X$, we consider the canonical basis
$(e_i^U)_{1\leq i\leq n}\subseteq \mathcal{E}(U)$. Suppose that
$\omega\equiv (\omega^U)_{U\in \mathcal{T}}$ is such that
\[\omega_{ij}^U\equiv \omega^U(e_i^U, e_j^U)\]for any open subset
$U\subseteq X$, and such that \[ \mbox{rank}\ (\omega^U_{ij})=
\mbox{rank}\ (\omega^V_{ij}),\]then the \textbf{rank} of $\omega$ is
by definition the rank of the matrix $(\omega_{ij}^U)$ for any open
$U\subseteq X$. Throughout this paper, \textit{all exterior
$\mathcal{A}$-$2$-forms }$\omega: \mathcal{E}\oplus
\mathcal{E}\longrightarrow \mathcal{A}$ on the free
$\mathcal{A}$-module $\mathcal{E}$ of rank $n$ \textit{are assumed
to have a rank}. We shall call such $\mathcal{A}$-$2$-forms
\textbf{rankwise} $\mathcal{A}$-$2$-forms.

As in Libermann and Marle[\cite{libermann}, pp. 3, 4], given an
exterior $\mathcal{A}$-$2$-form $\omega:\mathcal{E}\oplus
\mathcal{E}\longrightarrow \mathcal{A}$ on a free
$\mathcal{A}$-module $\mathcal{E}$, we denote by
${}^\flat\mathcal{E}$ the sub-$\mathcal{A}$-module im
$\omega^\flat\subseteq \mathcal{E}^\ast= \mathcal{E}$; see
Mallios[\cite{mallios}, p. 109] for a proof of the following
statement: \begin{quote} \textit{If $\varphi\equiv (\varphi_U)\in
Hom_\mathcal{A}(\mathcal{E}, \mathcal{F})$, then im $\varphi:=
\varphi(\mathcal{E})$ is a subsheaf of the sheaf
$\mathcal{F}$.}\end{quote}

Since $\omega^\flat_U(\mathcal{E}(U))$ is an
$\mathcal{A}(U)$-submodule of $\mathcal{E}^\ast(U)=
\mathcal{E}(U)^\ast= \mathcal{E}(U)$ for every open $U\subseteq
X$, it follows that im $\omega^\flat$ is a
sub-$\mathcal{A}$-module of $\mathcal{E}^\ast= \mathcal{E}$.

It is worth noting too that ${}^\flat\mathcal{E}=
(\ker\omega^\flat)^\perp$ within an $\mathcal{A}$-isomorphism. If
$\ker\omega^\flat\neq 0$, we have \[\mathcal{E}/\ker\omega^\flat=
{}^\flat\mathcal{E}\]within an $\mathcal{A}$-isomorphism, see
Mallios[\cite{mallios}, Lemma 2.1, p. 116, relation (2.19), p. 110].

The first part of the following theorem was proved in our previous
paper, see \cite{malliosntumba}, however, here, we are presenting
another proof for the same first part of the theorem; the
relevance of this approach consists in the fact it provides hints,
which are necessary for the proof of the second part of the
theorem. This theorem in its classical form is proved in Libermann
and Marle[\cite{libermann}, Theorem 2.3, p. 4] and
Sternberg[\cite{sternberg}, Theorem 5.1, p. 24].

\begin{theo}
Let $(X, \mathcal{A}, \mathcal{P}, |\cdot |)$ be an ordered
$\mathbb{R}$-algebraized space, endowed with an \textsf{absolute
value morphism} $($see $\cite{malliosntumba}$ $)$, such that every
strictly positive section of $\mathcal{A}$ is invertible.
Moreover, let $\omega$ be a rankwise $\mathcal{A}$-$2$-form on the
free $\mathcal{A}$-module $\mathcal{E}$ of rank $n$. Then, for
every $x\in X$, there exist an open neighborhood $U\subseteq X$ of
$x$ and a basis
\[\begin{array}{ll}s^1_U, \ldots, s^{2m}_U\in
{}^\flat\mathcal{E}(U), & 2\leq 2m \leq n\end{array}\] such that
\[\omega_U= \sum_{k=1}^ms^{2k-1}_U\wedge
s^{2k}_U;\]furthermore, $s^2_U$ may be chosen arbitrarily in
${}^\flat\mathcal{E}(U)$.
\end{theo}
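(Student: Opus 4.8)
The plan is to prove the affine Darboux theorem by induction on the rank $2m$ of the $\mathcal{A}$-$2$-form $\omega$, working fibrewise via the local sections over a suitable neighborhood $U$ of the given point $x$, and then patching the conclusion into a statement about $\omega_U$. Since $\omega$ is \emph{rankwise}, its rank is a well-defined integer $2m$ (even, by the antisymmetry of an exterior $\mathcal{A}$-$2$-form), independent of the open set chosen; this is what makes the induction meaningful at the level of the whole sheaf rather than fibre-by-fibre. The base case $2m=2$ amounts to writing a rank-two alternating form as a single wedge $s^1_U\wedge s^2_U$, which follows directly from the structure of the Kronecker gauge and the $\mathcal{A}(U)$-module isomorphism $\mathcal{E}(U)\cong\mathcal{E}^\ast(U)$ established in Proposition \ref{prop01}.

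First I would fix $x\in X$ and a small open neighborhood $U$ on which $\mathcal{E}$ trivializes, so that $\mathcal{E}(U)$ is a free $\mathcal{A}(U)$-module with canonical basis $(e_i^U)_{1\le i\le n}$ coming from the Kronecker gauge. Because $\omega_U\neq 0$, I can pick sections $u_U,v_U\in\mathcal{E}(U)$ with $\omega_U(u_U,v_U)$ invertible in $\mathcal{A}(U)$; here is exactly where the hypothesis that \emph{every strictly positive section of $\mathcal{A}$ is invertible} (together with the absolute value morphism) is used, to guarantee that a nonzero value of $\omega_U$ can be normalized to a unit. Rescaling, I set $s^1_U:=\omega^\flat_U(v_U)$ and take $s^2_U$ to be the dual section corresponding to $u_U$ under $\omega^\flat$, arranging $\omega_U(u_U,v_U)=1^U$; the freedom noted in the final clause — that $s^2_U$ may be chosen arbitrarily in ${}^\flat\mathcal{E}(U)$ — reflects the freedom in choosing the seed section $u_U$. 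Both $s^1_U,s^2_U$ land in ${}^\flat\mathcal{E}(U)=\mathrm{im}\,\omega^\flat_U$ by construction, using the identification ${}^\flat\mathcal{E}=(\ker\omega^\flat)^\perp$ recalled before the statement.

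Next I would decompose $\mathcal{E}(U)$ as the internal direct sum of the plane spanned by $u_U,v_U$ and its $\omega_U$-orthogonal complement $\mathcal{W}(U):=\{w\in\mathcal{E}(U):\omega_U(u_U,w)=\omega_U(v_U,w)=0\}$. The key computation is that $\omega_U':=\omega_U-s^1_U\wedge s^2_U$ vanishes on the chosen plane and restricts to an exterior $\mathcal{A}(U)$-$2$-form on $\mathcal{W}(U)$ of rank exactly $2m-2$; this reduction step is the heart of the Libermann--Marle/Cartan argument. Applying part $(a)$ of Theorem \ref{theo0.1} to control $\dim\mathcal{W}(U)$ and invoking the inductive hypothesis on $\omega_U'|_{\mathcal{W}(U)}$ yields sections $s^3_U,\dots,s^{2m}_U\in{}^\flat\mathcal{E}(U)$ with $\omega_U'=\sum_{k=2}^m s^{2k-1}_U\wedge s^{2k}_U$, whence $\omega_U=\sum_{k=1}^m s^{2k-1}_U\wedge s^{2k}_U$ as required.

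The main obstacle I anticipate is the reduction step: I must verify that the rank drops by exactly two when passing to the $\omega_U$-orthogonal complement, and that this drop is uniform over $U$ so that $\omega_U'|_{\mathcal{W}(U)}$ is again \emph{rankwise} and the induction can legitimately be applied at the sheaf level. This requires checking that the orthogonal decomposition $\mathcal{E}=\langle u,v\rangle\oplus\mathcal{W}$ is realized by a genuine sub-$\mathcal{A}$-module splitting compatible with restriction maps, and that invertibility of $\omega_U(u_U,v_U)$ persists on a possibly smaller neighborhood of $x$ — so the final $U$ may need to be shrunk finitely many times along the induction. The linear independence of $s^1_U,\dots,s^{2m}_U$ in ${}^\flat\mathcal{E}(U)$, and hence that they form a basis, follows from the nondegeneracy of $\omega_U$ on the span together with the rank count, using Proposition \ref{prop01} once more.
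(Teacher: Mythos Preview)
Your proposal is correct and follows essentially the same Cartan--Libermann--Marle reduction as the paper: both select, near $x$, a pair of sections on which $\omega$ pairs to a unit (the paper does this in coordinates by arranging $a_{12}(x)\neq 0$ and normalizing by $|a_{12}|$), define $s^1_U,s^2_U$ as inner products $i(\,\cdot\,)\omega_U$ lying in ${}^\flat\mathcal{E}(U)$, subtract $s^1_U\wedge s^2_U$, and then iterate/induct on the residual form of strictly smaller rank, shrinking $U$ as needed. The only cosmetic difference is that the paper works with an explicit coordinate basis and normalizes via $|a_{12}|^{-1}$ (using only that strictly positive sections are invertible) rather than asserting $\omega_U(u_U,v_U)$ itself is a unit, and it handles the ``$s^2_U$ arbitrary'' clause by choosing the initial basis so that $e_2$ is a preimage of the prescribed $t\in{}^\flat\mathcal{E}(U)$ under $i(\,\cdot\,)\omega_U$---exactly your ``freedom in choosing the seed section''.
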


\begin{proof}
Let $({e_1}_X,\ldots, {e_n}_X)\equiv (e_1,\ldots, e_n)$ be a basis
 of $\mathcal{E}(X)$, whose corresponding dual basis is $(e^1_X,\ldots,
e^n_X)\equiv (e^1, \ldots, e^n)$. The $X$-component of the
$\mathcal{A}$-$2$-form $\omega$ may be expressed as
\[\omega_X= \frac{1}{2}\sum_{(i, j)}a_{ij}e^i\wedge e^j,\]where the
coefficients $a_{ij}$ are sections of $\mathcal{A}$ over $X$, i.e.
$a_{ij}\in \mathcal{A}(X)\equiv \Gamma(X, \mathcal{A})$, and satisfy
the condition $a_{ji}= -a_{ij}$, $1\leq i, j\leq n$. By hypothesis,
at every $x\in X$, the coefficients $a_{ij}(x)$ are not all zero.
Let us fix a point $x\in X$; we can rearrange the basis $(e^1,
\ldots, e^n)$ as to obtain $a_{12}(x)\neq 0$. Therefore, for some
open neighborhood $U$ of $x$, we have \[|\rho^X_U(a_{12})|\in
\mathcal{P}^\ast(U),\]where $\mathcal{P}^\ast:=
\mathcal{P}-\{0\}\subseteq \mathcal{A}^\bullet$, cf.
Mallios[\cite{mallios}, relation (10.1), p. 335], and the
$(\rho^V_W)_{V\supseteq W, open}$, with $V$ running over the open
subsets of $X$, are the restriction maps for the $($complete$)$
presheaf of sections of the coefficient sheaf $\mathcal{A}$. Let us
assume likewise that the restriction maps for the $($complete$)$
presheaf of sections of $\mathcal{E}$ are maps
$(\sigma^V_W)_{V\supseteq W, open}$, with $V$ being any open set in
$X$; we set
\[s^1_U= \frac{1}{|\rho^X_U(a_{12})|}i(\sigma^X_U(e_1))\omega_U,\]and
\[s^2_U= i(\sigma^X_U(e_2))\omega_U\]i.e. \[s^1_U= \sigma^X_U(e^2)+
\frac{1}{|\rho^X_U(a_{12})|}\sum_{k=3}^n\rho^X_U(a_{1k})\sigma^X_U(e^k),\]and
\[s^2_U= -|\rho^X_U(a_{12})|\sigma^X_U(e^1)+
\sum_{k=3}^n\rho^X_U(a_{2k})\sigma^X_U(e^k).\] It is clear that
$(s^1_U, s^2_U, \sigma^X_U(e^3), \ldots, \sigma^X_U(e^n))$ is a
basis of $\mathcal{E}(U)$. Next, we set \[{\omega_1}_U= \omega_U-
s^1_U\wedge s^2_U;\]${\omega_1}_U$ does not contain any expression
involving $\sigma^X_U(e^1)$ or $\sigma^X_U(e^2)$. If
${\omega_1}_U=0$, then $\omega_U= s^1_U\wedge s^2_U$, and we are
done. Otherwise, we continue the same process until we achieve the
desired form, that is if \[{\omega_1}_U=
\frac{1}{2}\sum_{i,j=3}^nb_{ij}\rho^X_U(e^i)\wedge
\rho^X_U(e^j_U)\]with $b_{ji}=- b_{ij}\in \mathcal{A}(U)$, $3\leq i,
j\leq n$, then there exists a $b_{ij}\in \mathcal{A}(U)$ such that
$b_{ij}(x)\neq 0$. As above, there exists an open neighborhood
$V\subseteq U$ such that $\rho^U_V(b_{ij})\neq 0$. Through a
convenient rearrangement of the basis vectors $\rho^X_U(e^3),
\ldots, \rho^X_U(e^n)$, we may assume that $\rho^X_U(b_{34})\neq 0$.
So, as before, we shall get an $\mathcal{A}$-$2$-form
\[{\omega_2}_V= {\omega_1}_V- s^3_V\wedge s^4_V,\]where
\[\begin{array}{ll} s^3_V=
\frac{1}{|\rho^U_V(b_{34})|}i(\sigma^U_V(e_3){\omega_1}_V, & s^4_V=
i(\sigma^U_V(e_4){\omega_1}_V,\end{array}\]and so on $\cdots$

Let $t$ be a non-zero element in ${}^\flat\mathcal{E}(U)$. There
exists a non-zero vector $s_2\in \mathcal{E}(U)$ such that $t=
i(s_2)\omega_U$. Since $t\neq 0$, there exists a section $s_1\in
\mathcal{E}(U)$ such that $t(s_1)\neq 0$, hence $\omega_U(s_1,
s_2)\neq 0$. We choose the basis $(e_1, \ldots, e_n)$ of
$\mathcal{E}(X)$ such that $\sigma^X_U(e_1)= s_1$, and
$\sigma^X_U(e_2)= s_2$ so that $s^2_U= t$.
\end{proof}

\section{Symplectic Reduction}

We start by observing that Definition \ref{def05} hints that if
$\mathcal{E}$ is a free $\mathcal{A}$-module of rank $n$, then an
$\mathcal{A}$-bilinear morphism $\omega: \mathcal{E}\oplus
\mathcal{E}\longrightarrow \mathcal{A}$ is non-degenerate if and
only if $\omega$ is rankwise, and of rank $n$. A pair
$(\mathcal{E}, \omega)$, where $\mathcal{E}$ is an arbitrary
$\mathcal{A}$-module and $\omega: \mathcal{E}\oplus
\mathcal{E}\longrightarrow \mathcal{A}$ a non-degenerate
$\mathcal{A}$-bilinear morphism, is called a \textbf{symplectic
$\mathcal{A}$-module}.

Throughout this section, we will be particularly interested in
symplectic free $\mathcal{A}$-modules of finite rank.

The most important examples of sub-$\mathcal{A}$-modules of a
symplectic $\mathcal{A}$-module $(\mathcal{E}, \omega)$
($\mathcal{E}$ is not assumed necessarily free) are the following

\begin{mydf}
\emph{Let $(\mathcal{E}, \omega)$ be a symplectic $\mathcal{A}$-module and
$\mathcal{F}\subseteq \mathcal{E}$ a sub-$\mathcal{A}$-module. We
say that
\begin{enumerate}
\item [{$(i)$}] $\mathcal{F}$ is \textbf{isotropic} if
$\mathcal{F}\subseteq \mathcal{F}^\perp$, that is
$\omega|_{\mathcal{F}\oplus \mathcal{F}}\equiv
\omega|_\mathcal{F}= 0$. \item [{$(ii)$}] $\mathcal{F}$ is
\textbf{co-isotropic} if $\mathcal{F}^\perp\subseteq \mathcal{F}$,
that is $\omega|_{\mathcal{F}^\perp}= 0$. \item[{$(iii)$}]
$\mathcal{F}$ is a \textbf{symplectic sub-$\mathcal{A}$-module} if
$\omega|_\mathcal{F}: \mathcal{F}\oplus \mathcal{F}\longrightarrow
\mathcal{A}$ is non-degenerate. \item [{$(iv)$}] $\mathcal{F}$ is
\textbf{Lagrangian} if it is isotropic and has an isotropic
complement, that is $\mathcal{E}= \mathcal{F}\oplus \mathcal{G}$,
where $\mathcal{G}$ is isotropic.\hfill$\square$
\end{enumerate}}\label{def2.1}
\end{mydf}

The next result will often be used to define \textit{Lagrangian
sub-$\mathcal{A}$-modules.}

\begin{prop}
Let $(\mathcal{E}, \omega)$ be a symplectic $\mathcal{A}$-module of
finite rank on $X$, and $\mathcal{F}\subseteq \mathcal{E}$ a
sub-$\mathcal{A}$-module. Then, the following assertions are
equivalent: \begin{enumerate}\item [{$(i)$}] $\mathcal{F}$ is
Lagrangian. \item [{$(ii)$}] $\mathcal{F}= \mathcal{F}^\perp$,
within an $\mathcal{A}$-isomorphism. \item [{$(iii)$}] $\mathcal{F}$
is isotropic and rank $\mathcal{F}= \frac{1}{2}$ rank $\mathcal{E}$.
\end{enumerate}
\end{prop}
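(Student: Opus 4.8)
The plan is to prove the three implications $(i)\Rightarrow(ii)\Rightarrow(iii)\Rightarrow(i)$, working locally over each open $U\subseteq X$ and then passing to sheaves, since by Theorem~\ref{theo0.1} the annihilator $\mathcal{F}^\perp$ is governed sectionwise by the dimension identity $\dim\mathcal{F}(U)+\dim\mathcal{F}^\perp(U)=\dim\mathcal{E}(U)$. Throughout I would write $n=\mbox{rank}\,\mathcal{E}$, so that $\dim\mathcal{E}(U)=n$ for every open $U$, and exploit that $(\mathcal{E},\omega)$ being symplectic of finite rank lets me identify $\mathcal{F}^\perp$ with the $\omega$-annihilator from Lemma~\ref{lem01}.

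First I would establish $(i)\Rightarrow(ii)$. Suppose $\mathcal{F}$ is Lagrangian, so $\mathcal{F}$ is isotropic (giving $\mathcal{F}\subseteq\mathcal{F}^\perp$) and there is an isotropic complement $\mathcal{G}$ with $\mathcal{E}=\mathcal{F}\oplus\mathcal{G}$. Working over an open $U$, isotropy of $\mathcal{G}$ means $\mathcal{G}(U)\subseteq\mathcal{G}(U)^\perp$, and the direct sum decomposition forces $\dim\mathcal{F}(U)+\dim\mathcal{G}(U)=n$. I would then argue that $\mathcal{G}(U)\subseteq\mathcal{F}(U)^\perp$ fails in general unless one checks non-degeneracy; the clean route is to show $\dim\mathcal{F}(U)=\dim\mathcal{G}(U)=\tfrac{n}{2}$. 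Since $\mathcal{F}$ is isotropic, $\mathcal{F}(U)\subseteq\mathcal{F}(U)^\perp$, hence $\dim\mathcal{F}(U)\le\dim\mathcal{F}(U)^\perp=n-\dim\mathcal{F}(U)$, giving $\dim\mathcal{F}(U)\le\tfrac{n}{2}$; the symmetric estimate for the isotropic complement $\mathcal{G}$ yields $\dim\mathcal{G}(U)\le\tfrac{n}{2}$, and combined with $\dim\mathcal{F}(U)+\dim\mathcal{G}(U)=n$ this forces equality $\dim\mathcal{F}(U)=\tfrac{n}{2}$. Then the chain $\mathcal{F}(U)\subseteq\mathcal{F}(U)^\perp$ together with $\dim\mathcal{F}(U)=\dim\mathcal{F}(U)^\perp$ yields $\mathcal{F}(U)=\mathcal{F}(U)^\perp$, and by relation~(\ref{anni}) this upgrades to the $\mathcal{A}$-isomorphism $\mathcal{F}=\mathcal{F}^\perp$.

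Next, $(ii)\Rightarrow(iii)$ is the easiest link: if $\mathcal{F}=\mathcal{F}^\perp$ then $\mathcal{F}\subseteq\mathcal{F}^\perp$ gives isotropy immediately, and substituting into the dimension identity $\dim\mathcal{F}(U)+\dim\mathcal{F}^\perp(U)=n$ yields $2\dim\mathcal{F}(U)=n$, i.e. $\mbox{rank}\,\mathcal{F}=\tfrac{1}{2}\mbox{rank}\,\mathcal{E}$. For $(iii)\Rightarrow(i)$, I assume $\mathcal{F}$ isotropic with $\mbox{rank}\,\mathcal{F}=\tfrac{n}{2}$; isotropy plus the rank condition again forces $\mathcal{F}(U)=\mathcal{F}(U)^\perp$ sectionwise as in the first step. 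The remaining task is to produce an isotropic complement. Here I would invoke the affine Darboux theorem from Section~2: choosing a symplectic basis $s^1_U,\ldots,s^{2m}_U$ adapted so that $\mathcal{F}(U)$ is spanned by a Lagrangian half of the basis, the complementary half spans an isotropic subspace $\mathcal{G}(U)$ with $\mathcal{E}(U)=\mathcal{F}(U)\oplus\mathcal{G}(U)$; assembling these local complements into a subsheaf $\mathcal{G}$ of $\mathcal{E}$ gives the required isotropic complement.

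The main obstacle I anticipate is the last implication $(iii)\Rightarrow(i)$, specifically the construction of a \emph{global} isotropic complement $\mathcal{G}$ as a sub-$\mathcal{A}$-module rather than merely a family of fiberwise or sectionwise complements. The local Darboux construction produces isotropic complements $\mathcal{G}(U)$ over each $U$, but these need not patch into a well-defined subsheaf because the splitting $\mathcal{E}(U)=\mathcal{F}(U)\oplus\mathcal{G}(U)$ is not canonical and the restriction maps may not respect arbitrary choices. To control this I would verify that the correspondence $U\mapsto\mathcal{G}(U)$ can be made compatible with restrictions—most safely by defining $\mathcal{G}$ via a splitting of the short exact sequence $0\to\mathcal{F}\to\mathcal{E}\to\mathcal{E}/\mathcal{F}\to0$ and then correcting it to be isotropic using the symplectic form, or by appealing directly to the basis-adapted Darboux statement of Section~2 which already delivers local bases over a neighborhood of each point. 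I expect the bookkeeping of completeness (checking axioms $(S1)$, $(S2)$ as in Lemma~\ref{lem03}) for the presheaf $U\mapsto\mathcal{G}(U)$ to be the genuinely technical part, whereas the dimension-counting arguments are routine consequences of Theorem~\ref{theo0.1}.
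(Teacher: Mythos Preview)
Your overall cycle $(i)\Rightarrow(ii)\Rightarrow(iii)\Rightarrow(i)$ matches the paper's, and your $(ii)\Rightarrow(iii)$ is identical. The other two links differ from the paper in ways worth noting.

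For $(i)\Rightarrow(ii)$ you argue by dimension counting: isotropy forces $\dim\mathcal{F}(U)\le n/2$ and $\dim\mathcal{G}(U)\le n/2$, and the direct sum forces equality, whence $\mathcal{F}(U)=\mathcal{F}(U)^\perp$. The paper instead argues directly at the level of elements: given $s^U\in\mathcal{F}^\perp(U)$, it writes $s^U=s_0^U+s_1^U$ along $\mathcal{E}(U)=\mathcal{F}(U)\oplus\mathcal{G}(U)$ and kills $s_1^U$ via
\[
s_1^U\in\mathcal{G}^\perp(U)\cap\mathcal{F}^\perp(U)=(\mathcal{G}(U)+\mathcal{F}(U))^\perp=\mathcal{E}(U)^\perp=\{0\},
\]
using Theorem~\ref{theo0.1}(c) and non-degeneracy. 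Both arguments are sound; yours is shorter but leans on the rank formula of Theorem~\ref{theo0.1}(a), while the paper's route shows the inclusion $\mathcal{F}^\perp\subseteq\mathcal{F}$ without ever invoking dimensions.

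For $(iii)\Rightarrow(i)$ there is a genuine gap. The affine Darboux theorem of Section~2, as stated, produces a basis with $\omega_U=\sum_k s^{2k-1}_U\wedge s^{2k}_U$ and allows only $s^2_U$ to be prescribed freely in ${}^\flat\mathcal{E}(U)$; it does \emph{not} assert that the basis can be adapted so that a given Lagrangian $\mathcal{F}(U)$ is spanned by half of it. So your appeal to Darboux does not deliver the complement you need. The paper builds the isotropic complement by hand: starting from $\mathcal{F}=\mathcal{F}^\perp$, it picks $s_1^U\notin\mathcal{F}(U)$, sets $\mathcal{F}_1(U)=\mathcal{A}s_1^U$, then picks $s_2^U\in\mathcal{F}_1(U)^\perp\setminus(\mathcal{F}(U)+\mathcal{F}_1(U))$, sets $\mathcal{F}_2(U)=\mathcal{F}_1(U)+\mathcal{A}s_2^U$, and so on until $\mathcal{F}(U)+\mathcal{F}_k(U)=\mathcal{E}(U)$; isotropy of $\mathcal{F}_k$ is checked inductively via Theorem~\ref{theo0.1}. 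You correctly flag that the passage from local complements to a global sub-$\mathcal{A}$-module is the delicate point; the paper handles this by asserting completeness of the presheaves $U\mapsto\mathcal{F}_j(U)$ along the lines of Lemma~\ref{lem03}, rather than via Darboux or a splitting of the quotient sequence.
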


\begin{proof}
The following proof is derived from the proof of Proposition 5.3.3,
in Abraham and Marsden[\cite{abraham}, p. 404].

First, we prove that $(i)$ implies $(ii)$. We have
$\mathcal{F}\subseteq \mathcal{F}^\perp$ by hypothesis. Next, we
have to show the converse, i.e. $\mathcal{F}^\perp\subseteq
\mathcal{F}$. To this end, for every open subset $U$ of $X$, let
$s^U\in \mathcal{F}^\perp(U)= \mathcal{F}(U)^\perp$; since
$\mathcal{E}(U)= \mathcal{F}(U)+ \mathcal{G}(U)$, where
$\mathcal{G}$, according to Definition \ref{def2.1}$(iv)$, is an
isotropic complement of $\mathcal{F}$, write $s^U= s_0^U+ s_1^U$
for some $s_0^U\in \mathcal{F}(U)$ and $s_1^U\in \mathcal{G}(U)$.
We shall show that $s_1^U=0$. Indeed, let $s_1^U\in
\mathcal{G}(U)\subseteq \mathcal{G}^\perp(U)=
\mathcal{G}(U)^\perp$, and $s_1^U= s^U- s_0^U\in
\mathcal{F}^\perp(U)$. Thus,
\begin{eqnarray*} s_1^U \in \mathcal{G}^\perp(U)\cap
\mathcal{F}^\perp(U) & = & (\mathcal{G}(U)+ \mathcal{F}(U))^\perp,
\mbox{by virtue of Theorem \ref{theo0.1}$(c)$}\\ & = &
\mathcal{E}^\perp(U) \\ & = & \{0\}, \mbox{by the non-degeneracy of
$\omega_U$}.\end{eqnarray*} Thus, $s_1^U= 0$, so
$\mathcal{F}^\perp(U)\subseteq \mathcal{F}(U)$; since $U$ is
arbitrary, $\mathcal{F}^\perp\subseteq \mathcal{F}$, that is $(ii)$
holds.

The implication $(ii)\Longrightarrow (iii)$ is immediate.

Finally, we prove that $(iii)$ implies $(i)$. First, observe that
$(iii)$ implies $\dim \mathcal{F}(U)= \dim \mathcal{F}^\perp(U)$ for
any open subset $U$ of $X$. Since $\mathcal{F}\subseteq
\mathcal{F}^\perp$, we have that $\mathcal{F}= \mathcal{F}^\perp$.
Now, we construct the isotropic complement $\mathcal{G}$ of
$\mathcal{F}$ as follows. For every open $U\subseteq X$, choose
arbitrarily $s_1^U\notin \mathcal{F}(U)$, and let
\[\mathcal{F}_1(U):= \{as_1^U|\ a\in \mathcal{A}(U)\}\equiv
\mathcal{A}s_1^U.\]It is easy to see that the correspondence
\begin{equation}\label{eq14}U\longmapsto
\mathcal{F}_1(U)\end{equation} along with the obvious restrictions
yield a complete presheaf of $\mathcal{A}$-modules. (If $\rho^U_V:
\mathcal{E}(U)\longrightarrow \mathcal{E}(V)$ is a restriction map,
$\rho^U_V|_{\mathcal{F}_1(U)}: \mathcal{F}_1(U)\longrightarrow
\mathcal{F}_1(V)$ is the corresponding restriction map for the
presheaf defined in (\ref{eq14}).) The sheaf $\mathcal{F}_1$
generated by the presheaf defined in (\ref{eq14}) is clearly a free
$\mathcal{A}$-module of rank $1$. For every open $U\subseteq X$,
$\mathcal{F}(U)\cap \mathcal{F}_1(U)= \{0\}$; consequently
\begin{eqnarray*}\mathcal{F}(U)^{\perp\perp}+ \mathcal{F}(U)^\perp &
= & (\mathcal{F}(U)^\perp\cap \mathcal{F}_1(U))^\perp, \mbox{by
virtue of Theorem \ref{theo0.1}}\\ & = & (\mathcal{F}(U)\cap
\mathcal{F}_1(U))^\perp, \mbox{since $\mathcal{F}=
\mathcal{F}^\perp$} \\ & = & \{0\}^\perp \\ & = &
\mathcal{E}(U).\end{eqnarray*}Now, choose, for every open
$U\subseteq X$, an element $s_2^U\in \mathcal{F}_1(U)^\perp=
\mathcal{F}_1^\perp(U)$ such that $s_2^U\notin \mathcal{F}(U)+
\mathcal{F}_1(U)$. Next, let \[\mathcal{F}_2(U):= \mathcal{F}_1(U)+
\mathcal{A}s_2^U;\]proceed inductively as before until one gets
\begin{equation}\label{eq15} \mathcal{F}(U)+ \mathcal{F}_k(U)=
\mathcal{E}(U)\end{equation}for every open $U\subseteq X$. The
correspondence \[U\longmapsto \mathcal{F}_2(U)\]defines a complete
presheaf of $\mathcal{A}$-modules. The sheaf $\mathcal{F}_2$
generated by the foregoing presheaf is a free $\mathcal{A}$-module
of rank $2$. Similarly, the sheaf $\mathcal{F}_k$ obtained by
sheafifying the complete presheaf, given by \[U\longmapsto
\mathcal{F}_k(U)\]is a free $\mathcal{A}$-module of rank $k$.
Equation (\ref{eq15}) yields the following $\mathcal{A}$-isomorphism
\[\mathcal{F}+ \mathcal{F}_k= \mathcal{E}.\]By construction,
$\mathcal{F}(U)\cap \mathcal{F}_k(U)= \{0\}$ for every open
$U\subseteq X$, so $\mathcal{E}= \mathcal{F}\oplus \mathcal{F}_k$.
Also, by construction, \begin{eqnarray*} \mathcal{F}_2^\perp(U)=
\mathcal{F}_2(U)^\perp & = & (\mathcal{F}_1(U)+
\mathcal{A}s_2^U)^\perp \\ & = & \mathcal{F}_1(U)^\perp+
(\mathcal{A}s_2^U)^\perp \\ & \supseteq & \mathcal{A}s_1^U+
\mathcal{A}s_2^U \\ & = & \mathcal{F}_2(U).\end{eqnarray*} It
follows that $\mathcal{F}_2\subseteq \mathcal{F}_2^\perp$. In the
same way, one shows that $\mathcal{F}_k$ is isotropic as well.
Thus, $\mathcal{E}= \mathcal{F}\oplus \mathcal{F}_k$, with
$\mathcal{F}_k\subseteq \mathcal{F}_k^\perp$ as desired.
\end{proof}

\begin{lem}
Let $(\mathcal{E}, \omega)$ be a symplectic $\mathcal{A}$-module,
and $\mathcal{F}\subseteq \mathcal{E}$ a sub-$\mathcal{A}$-module.
Then, $\mathcal{F}/\mathcal{F}\cap \mathcal{F}^\perp$ has a natural
symplectic structure.
\end{lem}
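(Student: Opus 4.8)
The plan is to realize $\mathcal{F}\cap\mathcal{F}^\perp$ as the \emph{radical} of the restricted form $\omega|_{\mathcal{F}}$ and to show that $\omega|_{\mathcal{F}}$ therefore descends to a non-degenerate exterior form on the quotient. First I would record, for each open $U\subseteq X$, the section-level description
\[(\mathcal{F}\cap\mathcal{F}^\perp)(U)= \mathcal{F}(U)\cap\mathcal{F}^\perp(U)= \{s\in\mathcal{F}(U):\ \omega^U(s,t)=0\ \text{for all}\ t\in\mathcal{F}(U)\},\]
which combines the section description of the intersection sub-$\mathcal{A}$-module with the defining relation of $\mathcal{F}^\perp$ with respect to $\omega$. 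Because $\omega$ is alternating this kernel is the same whether computed in the first or the second argument, so it is genuinely the radical of $\omega|_{\mathcal{F}(U)}$; in particular $\mathcal{F}\cap\mathcal{F}^\perp$ is a sub-$\mathcal{A}$-module of $\mathcal{F}$ and the quotient $\mathcal{F}/\mathcal{F}\cap\mathcal{F}^\perp$ is a well-defined $\mathcal{A}$-module.

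Next I would define, on sections over each open $U$, the candidate reduced form
\[\widetilde{\omega}^U([s],[t]):= \omega^U(s,t),\qquad s,t\in\mathcal{F}(U),\]
where $[\,\cdot\,]$ denotes the class in $(\mathcal{F}/\mathcal{F}\cap\mathcal{F}^\perp)(U)= \mathcal{F}(U)/(\mathcal{F}\cap\mathcal{F}^\perp)(U)$, using the presheaf-of-sections description of the quotient as in Theorem \ref{theo0.1}$(g)$. Well-definedness is exactly the radical property just recorded: if $s-s'\in(\mathcal{F}\cap\mathcal{F}^\perp)(U)$ then $\omega^U(s-s',t)=0$ for every $t\in\mathcal{F}(U)$, so the value is independent of the representative in the first argument, and symmetrically in the second. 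The $\mathcal{A}(U)$-bilinearity and the alternating (exterior) character of $\widetilde{\omega}^U$ are inherited directly from those of $\omega^U$.

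To pass from the family $(\widetilde{\omega}^U)$ to a genuine $\mathcal{A}$-morphism $\widetilde{\omega}$, I would check compatibility with restrictions: since $\omega$ is an $\mathcal{A}$-morphism one has $\omega^V(s|_V,t|_V)=\omega^U(s,t)|_V$, and the restriction maps on the quotient are induced from those of $\mathcal{F}$, so the relevant squares commute and $(\widetilde{\omega}^U)$ assembles into a bilinear $\mathcal{A}$-morphism $\widetilde{\omega}:(\mathcal{F}/\mathcal{F}\cap\mathcal{F}^\perp)\oplus(\mathcal{F}/\mathcal{F}\cap\mathcal{F}^\perp)\longrightarrow\mathcal{A}$. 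Finally I would verify non-degeneracy in the sense of Definition \ref{def05}: fix an open $U$ and suppose $\widetilde{\omega}^U([s],[t])=0$ for all $[t]$; then $\omega^U(s,t)=0$ for all $t\in\mathcal{F}(U)$, whence $s\in\mathcal{F}^\perp(U)$, and since also $s\in\mathcal{F}(U)$ we get $s\in(\mathcal{F}\cap\mathcal{F}^\perp)(U)$, i.e. $[s]=0$; the other argument is symmetric. This exhibits $(\mathcal{F}/\mathcal{F}\cap\mathcal{F}^\perp,\widetilde{\omega})$ as a symplectic $\mathcal{A}$-module.

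The main obstacle is purely the sheaf-theoretic bookkeeping — confirming that the quotient's sections over $U$ are given by $\mathcal{F}(U)/(\mathcal{F}\cap\mathcal{F}^\perp)(U)$ and that the induced restriction maps match, so that the pointwise-defined $\widetilde{\omega}^U$ genuinely glue into a sheaf morphism — rather than any analytic difficulty. Once the radical identification is secured, well-definedness and non-degeneracy are formal.
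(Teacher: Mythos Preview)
Your argument is correct and follows essentially the same route as the paper: define the reduced form on cosets by $\widetilde{\omega}^U([s],[t])=\omega^U(s,t)$, verify well-definedness via $(\mathcal{F}\cap\mathcal{F}^\perp)(U)=\mathcal{F}(U)\cap\mathcal{F}^\perp(U)$, check the compatibility squares to obtain an $\mathcal{A}$-morphism, and deduce non-degeneracy from the radical characterisation. The only cosmetic difference is that you frame $\mathcal{F}\cap\mathcal{F}^\perp$ explicitly as the radical of $\omega|_{\mathcal{F}}$, whereas the paper simply computes $\omega_U(s+t,s'+t')=\omega_U(s,s')$ directly; the content is the same.
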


\begin{proof}
Indeed, let \begin{equation}\label{eq16}\widehat{\omega}_U(s+
(\mathcal{F}\cap \mathcal{F}^\perp)(U), s'+ (\mathcal{F}\cap
\mathcal{F}^\perp)(U)):= \omega_U(s, s')\end{equation}for all $s,
s'\in \mathcal{F}(U)$, where $U$ is an open set in $X$. Equation
(\ref{eq16}) can equivalently be written as \[\widehat{\omega}_U(s+
(\mathcal{F}(U)\cap \mathcal{F}^\perp(U)), s'+ (\mathcal{F}(U)\cap
\mathcal{F}^\perp(U)))= \omega_U(s, s'),\]because $\mathcal{F}\cap
\mathcal{F}^\perp$ is a sub-$\mathcal{A}$-module of $\mathcal{E}$
and $(\mathcal{F}\cap \mathcal{F}^\perp)(U)= \mathcal{F}(U)\cap
\mathcal{F}^\perp(U)= \mathcal{F}(U)\cap \mathcal{F}(U)^\perp$ for
all open subset $U\subseteq X$. Now, denote by
\[((\mathcal{F}/\mathcal{F}\cap \mathcal{F}^\perp)(U):=
\mathcal{F}(U)/\mathcal{F}(U)\cap \mathcal{F}^\perp(U),
\sigma^U_V)\]the (complete) presheaf of sections associated with the
sheaf $\mathcal{F}/\mathcal{F}\cap\mathcal{F}^\perp$, and by
\[(\mathcal{A}(U), \lambda^U_V)\]the corresponding presheaf of
sections for the coefficient sheaf $\mathcal{A}$. It is clearly easy
to see that \[\xymatrix{(\mathcal{F}/\mathcal{F}\cap
\mathcal{F}^\perp)(U)\oplus (\mathcal{F}/\mathcal{F}\cap
\mathcal{F}^\perp)(U)\ar[r]^{\hspace{27mm}\widehat{\omega}_U}\ar[d]_{\sigma^U_V\oplus
\sigma^U_V} & \mathcal{A}(U)\ar[d]^{\lambda^U_V}\\
(\mathcal{F}/\mathcal{F}\cap \mathcal{F}^\perp)(V)\oplus
(\mathcal{F}/\mathcal{F}\cap
\mathcal{F}^\perp)(V)\ar[r]_{\hspace{27mm}\widehat{\omega}_V} &
\mathcal{A}(V)}\]commutes for all open subsets $U, V\subseteq X$
such that $V\subseteq U$. Thus, \[\widehat{\omega}:
\mathcal{F}/\mathcal{F}\cap \mathcal{F}^\perp\oplus
\mathcal{F}/\mathcal{F}\cap\mathcal{F}^\perp\longrightarrow
\mathcal{A}\] is an $\mathcal{A}$-morphism.

We need now show that $\widehat{\omega}$ is well defined and is a
symplectic $\mathcal{A}$-form. Indeed, let $t, t'\in
\mathcal{F}(U)\cap \mathcal{F}^\perp(U)$, where $U$ is open in $X$;
then \begin{eqnarray*} \omega_U(s+t, s'+ t') & = & \omega_U(s, s')+
\omega_U(s+t, t')+ \omega_U(t, s') \\ & = & \omega_U(s,
s'),\end{eqnarray*}since $\omega_U(s+t, t')=0= \omega_U(t, s')$.
Thus, $\widehat{\omega}$ is well defined. It is easy to see that
$\widehat{\omega}$ is $\mathcal{A}$-bilinear. Let us now show that
$\widehat{\omega}$ is non-degenerate. Suppose $s\in \mathcal{F}(U)$
such that \begin{equation}\widehat{\omega}_U(s+
\mathcal{F}(U)\mathcal{F}^\perp(U), s'+ \mathcal{F}(U)\cap
\mathcal{F}^\perp(U))= 0\label{eq17}\end{equation} for all $s'\in
\mathcal{F}(U)$. By virtue of the definition of $\widehat{\omega}$,
see (\ref{eq16}), Equation (\ref{eq17}) becomes
\[\omega_U(s, s')=0\] for all $s'\in \mathcal{F}(U)$. Therefore,
$s\in \mathcal{F}^\perp(U)$, so in
$(\mathcal{F}/\mathcal{F}\cap\mathcal{F}^\perp)(U)$, is zero.
\end{proof}

We now introduce some terminology in connection with the preceding
lemma.

\begin{mydf}
\emph{Let $(\mathcal{E}, \omega)$ be a symplectic
$\mathcal{A}$-module, and $\mathcal{F}\subseteq \mathcal{E}$ a
\textit{co-isotropic sub-$\mathcal{A}$-module} of $\mathcal{E}$.
The symplectic $\mathcal{A}$-module
$(\mathcal{F}/\mathcal{F}^\perp, \widehat{\omega})$, where
$\widehat{\omega}$ is given by (\ref{eq16}), is called a
\textbf{reduced symplectic $\mathcal{A}$-module} or the
\textbf{$\mathcal{A}$-module $\mathcal{E}$ reduced by
$\mathcal{F}$}. The notation $\mathcal{E}/\mathcal{F}$ will also
be used to denote the underlying $\mathcal{A}$-module
$\mathcal{F}/\mathcal{F}^\perp$ of the reduced symplectic
$\mathcal{A}$-module $(\mathcal{F}/\mathcal{F}^\perp,
\widehat{\omega})$.}\hfill$\square$
\end{mydf}

\begin{prop}
Let $(\mathcal{E}, \omega)$ be a symplectic free
$\mathcal{A}$-module of finite rank, $\mathcal{G}\subseteq
\mathcal{E}$ a Lagrangian sub-$\mathcal{A}$-module and
$\mathcal{F}\subseteq \mathcal{E}$ a co-isotropic
sub-$\mathcal{A}$-module of $\mathcal{E}$. Then,
\[(\mathcal{G}\cap \mathcal{F})/\mathcal{F}^\perp\subseteq
\mathcal{E}_\mathcal{F}\]is Lagrangian in the reduced symplectic
$\mathcal{A}$-module.
\end{prop}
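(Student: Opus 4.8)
The plan is to read $(\mathcal{G}\cap\mathcal{F})/\mathcal{F}^\perp$ as the image of the sub-$\mathcal{A}$-module $\mathcal{G}\cap\mathcal{F}$ under the canonical projection $\pi:\mathcal{F}\longrightarrow\mathcal{F}/\mathcal{F}^\perp=\mathcal{E}_\mathcal{F}$, and to prove it is Lagrangian by checking conditions $(i)$ and $(iii)$ of the preceding proposition characterizing Lagrangian sub-$\mathcal{A}$-modules: that it is isotropic and that its rank is half that of the reduced module. Since $\mathcal{F}$ is co-isotropic we have $\mathcal{F}\cap\mathcal{F}^\perp=\mathcal{F}^\perp$, so $\mathcal{E}_\mathcal{F}=\mathcal{F}/\mathcal{F}^\perp$ carries the form $\widehat{\omega}$ defined in $(\ref{eq16})$; I would work sectionwise over each open $U\subseteq X$, passing to the sheaf level at the end through the completeness of the presheaves involved, exactly as in the preceding lemmas. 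Write $\overline{\mathcal{L}}:=\mbox{im}\,\pi|_{\mathcal{G}\cap\mathcal{F}}$ for the image sub-$\mathcal{A}$-module.

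First I would dispose of isotropy. For $s,s'\in(\mathcal{G}\cap\mathcal{F})(U)$ the defining relation $(\ref{eq16})$ gives $\widehat{\omega}_U(\pi_U(s),\pi_U(s'))=\omega_U(s,s')$; since $\mathcal{G}$ is Lagrangian and hence isotropic ($\mathcal{G}\subseteq\mathcal{G}^\perp$, so $\omega|_\mathcal{G}=0$) and $s,s'\in\mathcal{G}(U)$, the right-hand side vanishes. As every section of $\overline{\mathcal{L}}$ is locally of the form $\pi_U(s)$, and $\widehat{\omega}$ is an $\mathcal{A}$-morphism, $\overline{\mathcal{L}}$ is isotropic in $(\mathcal{E}_\mathcal{F},\widehat{\omega})$.

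The main work is the rank count, and here I would lean entirely on Theorem \ref{theo0.1}. Fix $U$ open. Because $\mathcal{F}^\perp(U)\subseteq\mathcal{F}(U)$, the kernel of $\pi_U$ on $(\mathcal{G}\cap\mathcal{F})(U)$ is $\mathcal{G}(U)\cap\mathcal{F}^\perp(U)$, so $\dim\overline{\mathcal{L}}(U)=\dim(\mathcal{G}(U)\cap\mathcal{F}(U))-\dim(\mathcal{G}(U)\cap\mathcal{F}^\perp(U))$. Using $\mathcal{G}=\mathcal{G}^\perp$ together with $\mathcal{G}^\perp(U)=\mathcal{G}(U)^\perp$ and $\mathcal{F}^\perp(U)=\mathcal{F}(U)^\perp$, Theorem \ref{theo0.1}$(c)$ rewrites $\mathcal{G}(U)\cap\mathcal{F}^\perp(U)=\mathcal{G}(U)^\perp\cap\mathcal{F}(U)^\perp=(\mathcal{G}(U)+\mathcal{F}(U))^\perp$, and then Theorem \ref{theo0.1}$(a)$ plus the inclusion–exclusion identity for $\dim(\mathcal{G}(U)+\mathcal{F}(U))$ collapses everything (the term $\dim(\mathcal{G}(U)\cap\mathcal{F}(U))$ cancels) to $\dim\overline{\mathcal{L}}(U)=\dim\mathcal{G}(U)+\dim\mathcal{F}(U)-\dim\mathcal{E}(U)$. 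The Lagrangian rank condition $\dim\mathcal{G}(U)=\tfrac12\dim\mathcal{E}(U)$ turns this into $\dim\overline{\mathcal{L}}(U)=\dim\mathcal{F}(U)-\tfrac12\dim\mathcal{E}(U)$. On the other side, Theorem \ref{theo0.1}$(a)$ gives $\dim\mathcal{F}^\perp(U)=\dim\mathcal{E}(U)-\dim\mathcal{F}(U)$, whence $\dim\mathcal{E}_\mathcal{F}(U)=\dim\mathcal{F}(U)-\dim\mathcal{F}^\perp(U)=2\dim\mathcal{F}(U)-\dim\mathcal{E}(U)$, exactly twice $\dim\overline{\mathcal{L}}(U)$. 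Thus $\mbox{rank}\,\overline{\mathcal{L}}=\tfrac12\,\mbox{rank}\,\mathcal{E}_\mathcal{F}$, and combining this with isotropy, the implication $(iii)\Rightarrow(i)$ of the proposition shows $\overline{\mathcal{L}}$ is Lagrangian.

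The step I expect to be the genuine obstacle is not the algebra but the transition from the sectionwise identities to a clean sheaf statement: one must verify that $U\mapsto\pi_U((\mathcal{G}\cap\mathcal{F})(U))$ generates the same sheaf as (a complete presheaf whose sheafification is) $\overline{\mathcal{L}}$, so that $\dim\overline{\mathcal{L}}(U)$ genuinely computes the rank of the image sheaf and the half-rank condition is meaningful fiberwise; this is precisely where the freeness and finite rank of $\mathcal{E}$, and the completeness results of Section $1$, are indispensable. A secondary point to treat carefully is the notational convention: $(\mathcal{G}\cap\mathcal{F})/\mathcal{F}^\perp$ must be understood as the image $((\mathcal{G}\cap\mathcal{F})+\mathcal{F}^\perp)/\mathcal{F}^\perp$ rather than a literal quotient, since $\mathcal{F}^\perp\subseteq\mathcal{G}$ need not hold for an arbitrary Lagrangian $\mathcal{G}$.
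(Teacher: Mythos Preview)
Your proof is correct and follows exactly the same route as the paper: establish isotropy via $(\ref{eq16})$ using $\mathcal{G}\subseteq\mathcal{G}^\perp$, and then verify the half-rank criterion $(iii)$ of the preceding proposition. The paper in fact delegates the dimension count to Abraham--Marsden \cite{abraham}, whereas you carry it out explicitly from Theorem \ref{theo0.1}$(a)$, $(c)$ and inclusion--exclusion, so your argument is strictly more detailed than the paper's own; your closing remarks on the sheaf-level interpretation of $(\mathcal{G}\cap\mathcal{F})/\mathcal{F}^\perp$ flag issues the paper also leaves implicit.
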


\begin{proof}
Take an open subset $U\subseteq X$ and $s, s'\in (\mathcal{G}\cap
\mathcal{F})(U)= \mathcal{G}(U)\cap \mathcal{F}(U)$. One has
\[\widehat{\omega}_U(s+ \mathcal{F}^\perp(U), s'+
\mathcal{F}^\perp(U))= \omega_U(s, s')=0;\]therefore
$(\mathcal{G}\cap \mathcal{F})/\mathcal{F}^\perp$ is isotropic.

Next, we need show that \[\dim ((\mathcal{G}\cap
\mathcal{F})/\mathcal{F}^\perp)(U)= \frac{1}{2}\dim
(\mathcal{F}/\mathcal{F}^\perp)(U),\]for every open $U\subseteq
X$, to complete the proof of the proposition. The proof of this
fact can be found in Abraham and Marsden[\cite{abraham},
Proposition 5.3.10, pp 407-408].
\end{proof}

\addcontentsline{toc}{section}{REFERENCES}

\noindent Anastasios Mallios\\ {Department of Mathematics} \\
{University of Athens}\\ {Athens, Greece}\\
{Email: amallios@math.uoa.gr}

 \noindent Patrice P.
Ntumba\\{Department of Mathematics and Applied
Mathematics}\\{University of Pretoria}\\ {Hatfield 0002, Republic of
South Africa}\\{Email: patrice.ntumba@up.ac.za}

\end{document}